\newlength{\temp@wc@width}
\newlength{\temp@wc@height}
\newcommand{\widecheck}[1]{%
\setlength{\temp@wc@width}{\widthof{$#1$}}%
\setlength{\temp@wc@height}{\heightof{$#1$}}%
#1\hspace{-\temp@wc@width}%
\raisebox{\temp@wc@height+2pt}[\heightof{$\widehat{#1}$}]%
{\rotatebox[origin=c]{180}{\vbox to 0pt{\hbox{$\widehat{\hphantom{#1}}$}}}}%
}
 \def\M {\mbox{\boldmath$ M$}}
 \def\kv{ \mbox{\boldmath$ k$}}
 \def\kvn{{\mbox{\footnotesize \boldmath$ k \in \M^j$}\atop \mbox
{\footnotesize \boldmath$|k|$=\it n}}}
 \def\kvm{{\mbox{\footnotesize \boldmath$ k \in \M^j$}\atop \mbox
{\footnotesize {\boldmath$|k|$}=  $\it\mu$}}}
 \def\kvl{{\mbox{\footnotesize \boldmath$ k \in \M^j$}\atop \mbox
{\footnotesize {\boldmath$|k|$}=$\ell$}}}
 \numberwithin{equation}{section}
 \theoremstyle{plain}
 \newtheorem{theorem}{Theorem}[section]
 \newtheorem{lemma}[theorem]{Lemma}
  \newtheorem{problem}[theorem]{Problem}
 \newtheorem{corollary}[theorem]{Corollary}
 \newtheorem{proposition}[theorem]{Proposition}
   \theoremstyle{definition}
\newtheorem{definition} [theorem] {Definition}
 \newcommand{\zit}[1]{(\ref{#1})}
   \def\dint{\int\hspace{-5pt}\int}
 \def\supp{\operatorname{supp}}
 \def\bs{\boldsymbol}
 \def\R{ \mathbb R}
 \def\D{{ \mathbb D}}
  \def\K{{ \mathbb K}}
 \def\C{{ \mathbb C}}
 \def\N{{ \mathbb N}}
  \def\F{{ \mathscr F}}
 \def\e{\varepsilon}
 \def\dbar{\ov\partial}
 \def\sp {\quad}
 \def\ssc{\scriptscriptstyle}
 \def\bs{\boldsymbol}
 \def\dis{\displaystyle}
 \def\union{\cup}
 \def\Union{\bigcup}
 \def\inter{\cap}
 \def\Inter{\bigcap }
 \def\ov{\overline}
 \def\ss{\subseteq}
 \def\emp{\emptyset}
 \def\oh{{\scriptstyle{\mathcal O}}}
 \def\buildrel#1_#2^#3{\mathrel{\mathop{\kern 0pt#1}\limits_{#2}^{#3}}}
\begin{document}

 \title [Corona theorems]{Corona-type theorems and division in some function algebras
  on planar domains}


 \author{Raymond Mortini}
  \address{
   \small Universit\'{e} de Lorraine\\
\small D\'{e}partement de Math\'{e}matiques et  
Institut \'Elie Cartan de Lorraine,  UMR 7502\\
\small Ile du Saulcy\\
 \small F-57045 Metz, France} 
 \email{mortini@univ-metz.fr}

\author{Rudolf Rupp}
\address{ Fakult\"at Allgemeinwissenschaften\\
\small Georg-Simon-Ohm-Hochschule N\"urnberg\\
\small Kesslerplatz 12\\
\small D-90489 N\"urnberg, Germany
}
\email  {Rudolf.Rupp@ohm-hochschule.de}

 \subjclass{Primary 46J10, Secondary 46J15; 46J20; 30H50}

 \keywords{algebras of analytic functions; ideals; corona-type theorems; Nullstellensatz;
 division in algebras of smooth functions }
 
 \begin{abstract}
Let  $A$ be an algebra of bounded smooth functions on the interior of a compact set  in the plane.
We study the following problem:  if $f,f_1,\dots,f_n\in A$ satisfy  $|f|\leq \sum_{j=1}^n |f_j|$,
does there exist $g_j\in A$ and a  constant $N\in\N$ such that  $f^N=\sum_{j=1}^n g_j f_j$?
A prominent role in our proofs is played by a new space, $C_{\dbar, 1}(K)$, 
which  we call the algebra of $\dbar$-smooth functions.

In the case $n=1$, a complete solution is given for the algebras $A^m(K)$ of functions 
holomorphic in $K^\circ$ and whose first $m$-derivatives extend continuously to $\ov{K^\circ}$.
This necessitates the introduction of a special class of compacta, the so-called locally L-connected sets.

We also present another  constructive proof of  the Nullstellensatz for $A(K)$, that is only  based
on elementary $\dbar$-calculus and  Wolff's method.

  \end{abstract}

  \maketitle

 \centerline {\small\the\day.\the \month.\the\year} \medskip
 \section*{Introduction}

 Our paper is motivated by the following problem.
 \begin{problem} \label{N=3}
 Let $A(K)$ be the algebra of  all complex-valued functions that are
 continuous on the compact  set $K\ss\C$ and holomorphic  in the interior $K^\circ$ of $K$.
 Given $f,f_1,\dots, f_n\in A(K)$ satisfying 
 \begin{equation}\label{funda}
 |f|\leq \sum_{j=1}^n |f_j|,
 \end{equation} 
 does there exist a power $N$ such that $f^N$ belongs to the ideal $I_{A(K)}(f_1,\dots, f_n)$ generated by 
 the $f_j$ in $A(K)$?
  \end {problem}
 
 In view of Wolff's  result for the algebra of bounded holomorphic  functions in the open unit
 disk $\D$  (see \cite{ga}), the expected value for $N$ seems to be 3. We are unable to confirm this;
 our intention therefore is to present sufficient conditions that guarantee the existence of such
 a constant that may depend on the $n$-tuple $(f_1,\dots, f_n)$.
 If $f$ is zero-free on $K$ then, by the   classical Nullstellensatz  for $A(K)$, 
 $f=\sum_{j=1}^n f_jg_j$ for some $g_j\in A(K)$, whenever the $f_j$ satisfy \zit{funda}.
 We present a constructive proof of this assertion by using $\dbar$-calculus.
 A different constructive  proof, indirectly based on properties of the Pompeiu operator 
 $$Pf(z):=\dint_K \frac{f(\xi)}{\xi-z}\; d\sigma(\xi)$$
 and elementary approximation theory,  was recently given by the authors in \cite{mr}.
 
 To achieve our goals,  we consider various algebras of smooth functions on $K$.  Here is our setting.
 First we present some notation and introduce, apart from $A(K)$ that was defined above, 
  the spaces we are dealing with.
 
As usual, if $f_x$ and $f_y$ are the partial derivatives of $f$, then 
 $$\mbox{$\partial f= (1/2)(f_x-if_y)$ and $\dbar f=(1/2)(f_x+if_y)$}$$
  denote the Wirtinger derivatives of $f$.
 Let $\K$ be either $\R$ or $\C$.

 \begin{definition}\hfill
 
 \begin{enumerate}
 \item [(1)]  $C(K,\K)$ is the set of all  continuous, $\K$-valued functions on $K$.
 If $\K=\C$, then we also write $C(K)$ instead of $C(K,\C)$.
  \item [(2)] $C^m(K)$ is the set of all $f\in C(K)$ that are  $m$-times continuously 
 differentiable on $K^\circ$ and such that each of the partial derivatives up to the order $m$
  extends  continuously to $K$;
 \item  [(3)]  $A^m(K):= A(K)\inter C^m(K)$.
 \end{enumerate}
 Of course $A^m(K)$ is the set of all functions $f\in A(K)$ such that 
 $f^{(j)}|_{K^\circ}$ extends continuously to $K$ for $j=1,\dots,m$.
  Finally, we introduce the following  space of $\dbar$-smooth functions.
 This space will play an important role when solving $\dbar$-equations.
 \begin{enumerate}
\item [(4)] $C_{\dbar,1}(K)$ is the set of all $f\in C(K)$
 continuously differentiable in  $K^\circ$ with $\dbar f\in C^1(K^\circ)$
 and such that $\dbar f$ admits a continuous extension to $K$.
\end{enumerate}
\end{definition}
 
 The set of continuous  (respectively infinitely often differentiable) 
 functions on $\C$  with compact 
 support  is denoted  by $C_c(\C)$ (respectively $C^\infty_c(\C)$). 
 If $f\in C(K,\K)$, then $||f||_\infty=\sup_{z\in K}|f(z)|$ and
 $$Z(f)=\{z\in K: f(z)=0\}$$
 is the {\it zero set} of $f$.
 
 If $A$ is a commutative unital algebra with unit element denoted by $1$,  then 
 the ideal generated by $f_1,\dots, f_n\in A$ is denoted by $I_A(f_1,\dots,f_n)$;
 that is
 $$I_A(f_1,\dots,f_n)=\biggl\{\sum_{j=1}^n g_j f_j: g_j\in A\biggr\}.$$

 \section{The algebra of $\dbar$-smooth functions}

 If we endow $C_{\dbar,1}(K)$ with the pointwise operations $(+, \bullet, \cdot_{s})$, then 
 it becomes an algebra; just note that for $f,g\in C_{\dbar,1}(K)$,
 $\dbar(fg)=(\dbar f) g+f \dbar g$.   The most important subalgebra of  $C_{\dbar,1}(K)$ is $A(K)$.
 It is obvious that $C_{\dbar,1}(K)$ contains all the polynomials in the real variables $x$ and $y$,
  or what is equivalent, all polynomials in $z$ and $\ov z$.  In particular 
  $C_{\dbar,1}(K)$ is uniformly dense
  in $C(K)$. Note  that  the class of real-valued functions $u$ in  $C_{\dbar,1}(K)$ coincides  with
 $C^2(K^\circ,\R)\inter C^1(K,\R)$, since   $2\dbar f=u_x +i u_y \in C^1(K^\circ)\inter C(K)$.
  We point out
 that for $f\in C_{\dbar,1}(K)$, the function $\partial f:K^\circ\to\C$  may not even be bounded. 
 Consider for example on the unit disk $\D$
  the standard holomorphic  branch of the function $ \sqrt{1-z}$. Here 
  $$\partial f(z)=f '(z)= \frac{1}{2}(1-z)^{-1/2},$$
   which is unbounded.  Hence 
   $$ \mbox{$C_{\dbar,1}(\ov\D)\setminus  C^1(\ov\D)\not=\emp$ and $C^1(\ov\D)\setminus C_{\dbar,1}(D)
   \not=\emp$}.$$

   On the other hand,  $C_{\dbar,1}(K)$  is strictly bigger than $C^2(K)$. 
   The example above also   shows
   that if $f\in C_{\dbar,1}(K)$, then $\ov f$ may not belong to $C_{\dbar,1}(K)$
   (note that $\dbar {\;\ov f} =\ov {\partial f}$).\\
  
{\bf Question}. Is the restriction algebra $C_{\dbar,1}(K)|_{K^\circ}$  a subalgebra of $C^2(K^\circ)$?
Note that we only assume that $\dbar f=  \frac{u_x-v_y}{2} +i\,\frac{u_y+v_x}{2}$ is  continuously differentiable in $K^\circ$. \\

   A useful algebraic property is that  $C_{\dbar,1}(K)$ is inversionally closed; 
  this means that if $f\in C_{\dbar,1}(K)$
  has no zeros on $K$, then $1/f\in C_{\dbar,1}(K)$.
  
  \noindent A generalization to $n$-tuples 
  (=solution to the B\'ezout equation ${\sum_{j=1}^n x_jf_j=1}$)
   is given by the following theorem.

  \begin{theorem}\label{bez}
  Suppose that the functions $f_1,\dots,f_n\in C_{\dbar,1}(K)$ have no common zero on $K$.
  Then the B\'ezout equation $\sum_{j=1}^n x_jf_j=1$ admits a solution $(x_1,\dots,x_n)$
  in $C_{\dbar,1}(K)$. 
  \end{theorem}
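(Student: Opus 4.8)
The plan is to bypass any genuine $\dbar$-estimate and instead reduce to the inversional closedness of $C_{\dbar,1}(K)$ recorded above, by perturbing the naive ``partition of unity'' solution inside $C_{\dbar,1}(K)$. First, since $f_1,\dots,f_n$ are continuous and have no common zero on the compact set $K$, the function $h:=\sum_{j=1}^n|f_j|^2$ is continuous and bounded below by some $\delta>0$ on $K$. Hence $\varphi_j:=\ov{f_j}/h$ lies in $C(K)$ for each $j$, and $\sum_{j=1}^n\varphi_j f_j\equiv 1$ on $K$. As explained just above, $\ov{f_j}$ — and therefore $\varphi_j$ — need not belong to $C_{\dbar,1}(K)$, because $\partial f_j$ may be unbounded (and, even in the interior, need not be continuously differentiable, since $\dbar\,\ov{f_j}=\ov{\partial f_j}$), so $(\varphi_1,\dots,\varphi_n)$ is not yet an admissible solution.

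To repair this I would use that $C_{\dbar,1}(K)$ is uniformly dense in $C(K)$, since it contains all polynomials in $z$ and $\ov z$. Pick $\e>0$ with $\e\sum_{j=1}^n\|f_j\|_\infty<1$ and, for each $j$, choose $g_j\in C_{\dbar,1}(K)$ with $\|g_j-\varphi_j\|_\infty<\e$. Set $u:=\sum_{j=1}^n g_j f_j$. Because $C_{\dbar,1}(K)$ is an algebra, $u\in C_{\dbar,1}(K)$, and
$$\|1-u\|_\infty=\Bigl\|\sum_{j=1}^n(\varphi_j-g_j)f_j\Bigr\|_\infty\le\e\sum_{j=1}^n\|f_j\|_\infty<1,$$
so $u$ takes its values in the open unit disk centred at $1$ and in particular has no zero on $K$.

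Finally, inversional closedness of $C_{\dbar,1}(K)$ gives $1/u\in C_{\dbar,1}(K)$; then $x_j:=g_j/u\in C_{\dbar,1}(K)$ and $\sum_{j=1}^n x_j f_j=u/u=1$, which is the desired B\'ezout solution. The whole argument rests on a single observation: although the explicit smooth coefficients $\ov{f_j}/h$ of the trivial solution are \emph{not} $\dbar$-smooth, an arbitrarily small uniform perturbation of them inside $C_{\dbar,1}(K)$ still solves the B\'ezout equation up to an invertible factor — so, unlike for $A(K)$, no Wolff-type $\dbar$-construction is needed here, only density and inversional closedness. The only point to watch is the degenerate case $K=\emp$ (and the remark that $K\ne\emp$ forces some $f_j\not\equiv 0$, whence $\sum_j\|f_j\|_\infty>0$), which is trivial. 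If one wanted a ``constructive'' flavour, the $g_j$ can even be taken to be polynomials in $z$ and $\ov z$.
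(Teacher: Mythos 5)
Your proposal is correct and is essentially the paper's own first proof: both start from the continuous solution $\ov{f_j}/\sum_k|f_k|^2$, approximate it uniformly by elements of $C_{\dbar,1}(K)$ (polynomials in $z$ and $\ov z$ via Weierstrass), observe that the resulting $\sum_j g_jf_j$ is bounded away from zero, and finish with inversional closedness. The only differences are cosmetic (a general $\e$ instead of the explicit $1/2$ bound), so nothing further is needed.
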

  We present two proofs.
  \begin{proof}
  (1) Let $$q_j:=\frac{\ov {f_j}}{\sum_{k=1}^n |f_k|^2}.$$
  Then $q_j\in C(K)$ and $\sum_{j=1}^nq_jf_j=1$. By Weierstrass' approximation theorem
  choose a polynomial $p_j(z,\ov z)$ such that  on $K$
  $$ |p_j-q_j|\leq \bigl(2\sum_{k=1}^n ||f_j||_\infty\bigr)^{-1}.$$
  Then 
  $$\bigl|\sum_{j=1}^n p_jf_j\bigr|\geq\left|\sum_{j=1}^n q_jf_j\right| -
  \left|\sum_{j=1}^n (p_j-q_j)f_j\right|\geq 1-\frac{1}{2}=\frac{1}{2}.$$
  
  Note that $\sum_{j=1}^n p_jf_j\in C_{\dbar,1}(K)$. 
  Because $C_{\dbar,1}(K)$ is inversionally closed, we get that
  $$x_j=\frac{p_j}{\sum_{k=1}^n p_kf_k}\in C_{\dbar,1}(K).$$
  Since  $\sum_{j=1}^n x_jf_j=1$,  we see that $(x_1,\dots, x_n)$ is
   the desired solution to the B\'ezout equation in $C_{\dbar,1}(K)$.\\
  
  (2) This is similar to  \cite[Exercice 25, p. 139]{an}.   Let $f_j$ be 
  continuously extended to an open  neighborhood  $U$ of $K$ so
   that $\sum_{j=1}^n |f_j|\geq \delta>0$ on $U$.  
  Let $E_j=\{z\in U: |f_j(z)|>0\}$. Since the functions $f_j$ have no common zeros
  on $U$,  $\Union_{j=1}^n E_j=U$.  
  Let $\{\alpha_j:j=1,\dots,n\}$ be a
   $C^\infty_c$-partition
  of  unity subordinate to the open covering $\{E_1,\dots, E_n\}$ of $K$; that is, 
   $$\alpha_j\in C^\infty_c(\C), 
   \sp 0\leq \alpha_j\leq 1,\sp \supp \alpha_j\ss E_j, \sp\sum_{j=1}^n \alpha_j=1 \;{\rm on}\; K$$
(see \cite [p. 162]{ru}).
Then $x_j:=\alpha_jf_j^{-1}\in C_{\dbar,1}(K)$ and 
$$\sum_{j=1}^n x_jf_j=\sum_{j=1}^n  \alpha_j =1.
\eqno\qedhere$$
\end{proof}

     Next we deal with the generalized B\'ezout equation $\sum_{j=1}^n x_j f_j=f$.
     
    \begin{proposition}
Let $f,f_j\in  C_{\dbar,1}(K)$ and suppose that $f$ vanishes in a neighborhood 
(within $K$) of $\Inter_{j=1}^n Z(f_j)$. Then $f\in I_{C_{\dbar,1}(K)}(f_1,\dots, f_n)$.
\end{proposition}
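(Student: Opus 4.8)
The plan is to run the partition-of-unity argument of the second proof of Theorem~\ref{bez}, with one extra open set inserted to absorb the region where $f$ vanishes. Write $Z:=\Inter_{j=1}^n Z(f_j)$ and let $V\ss K$ be the relatively open neighbourhood of $Z$ on which $f\equiv 0$. First I would extend each $f_j$ to a continuous function $\widetilde f_j$ on an open neighbourhood $\Omega$ of $K$ in $\C$, choose an open set $\Omega_0\ss\C$ with $\Omega_0\inter K=V$, and put $\Omega_j:=\{z\in\Omega:\widetilde f_j(z)\ne 0\}$ for $j=1,\dots,n$. Then $\{\Omega_0,\dots,\Omega_n\}$ covers $K$: if $z\in K$ is not in $\Omega_0$, then $z\notin V\supseteq Z$, so $f_j(z)\ne 0$ for some $j\ge 1$, i.e. $z\in\Omega_j$. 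Picking, as in \cite[p.~162]{ru}, functions $\alpha_0,\dots,\alpha_n\in C^\infty_c(\C)$ with $0\le\alpha_j\le 1$, $\supp\alpha_j\ss\Omega_j$ and $\sum_{j=0}^n\alpha_j=1$ on $K$, I would then set $x_j:=\alpha_j\, f/f_j$ for $j=1,\dots,n$, with the convention $x_j:=0$ off $\supp\alpha_j$ (the two prescriptions agree on the overlap, since $\alpha_j$ vanishes there).

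Since $\supp\alpha_0\inter K\ss\Omega_0\inter K=V$ and $f|_V=0$, we get $\alpha_0 f\equiv 0$ on $K$, so that
$$\sum_{j=1}^n x_j f_j \;=\; f\sum_{j=1}^n\alpha_j \;=\; f(1-\alpha_0)\;=\;f\qquad\text{on }K.$$
Everything thus reduces to checking that $x_j\in C_{\dbar,1}(K)$ for each $j$. For that, fix $j$ and choose an open $W_j$ with $\supp\alpha_j\ss W_j\ss\ov{W_j}\ss\Omega_j$. On the compact set $L:=\ov{W_j}\inter K$ the function $f_j$ coincides with $\widetilde f_j$ and is therefore zero-free; since $f_j|_L$ is a zero-free element of the inversionally closed algebra $C_{\dbar,1}(L)$ (a $C_{\dbar,1}(K)$-function restricts to a $C_{\dbar,1}(L)$-function because $L^\circ\ss K^\circ$), we get $1/f_j\in C_{\dbar,1}(L)$, hence $f/f_j\in C_{\dbar,1}(L)$. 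In particular $f/f_j$ and, by the $\dbar$-product rule, $\dbar(f/f_j)$ are bounded on $L$, are $C^1$ on $L^\circ$, and extend continuously to $L$.

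On $W_j\inter K$ one has $\dbar x_j=(\dbar\alpha_j)(f/f_j)+\alpha_j\,\dbar(f/f_j)$, which is $C^1$ on $W_j\inter K^\circ$ and continuous up to $W_j\inter K$, while off $\supp\alpha_j$ both $x_j$ and $\dbar x_j$ vanish identically. The two descriptions are compatible because $\alpha_j$, \emph{together with all of its derivatives}, vanishes on $\partial\supp\alpha_j$: the continuous function $\nabla\alpha_j$ is $0$ on the open set $\C\setminus\supp\alpha_j$, hence on its closure, which contains $\partial\supp\alpha_j$, and likewise for higher derivatives. Patching over the two sets $W_j\inter K$ and $K\setminus\supp\alpha_j$, which are open in $K$ and cover $K$, then yields $x_j\in C(K)\inter C^1(K^\circ)$, $\dbar x_j\in C^1(K^\circ)$, and $\dbar x_j$ continuous on $K$; that is, $x_j\in C_{\dbar,1}(K)$. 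I expect this last patching to be the only delicate point: one must make sure that extending $x_j$ by zero across $\partial\supp\alpha_j$ does not spoil the $C^1$-regularity of $\dbar x_j$, which is exactly what the infinite-order vanishing of $\alpha_j$ there — combined with the boundedness of $f/f_j$ and $\dbar(f/f_j)$ near that boundary — guarantees.
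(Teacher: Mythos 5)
Your proof is correct and follows essentially the same route as the paper's: a smooth partition of unity subordinate to a cover by the non-vanishing sets of the $f_j$, with the neighbourhood where $f$ vanishes absorbing the common zero set (the paper covers only $S=K\setminus V$ and multiplies by $f$ at the end, whereas you add $\Omega_0$ as an extra element of the cover — a trivial reorganization). Your extra care in patching $\alpha_j f/f_j$ across $\partial\supp\alpha_j$ fills in a verification the paper leaves implicit, but introduces no new idea.
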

\begin{proof}

Let $V\ss\C$ be an open set satisfying $\Inter_{j=1}^n Z(f_j)\ss V\inter K\ss Z(f)$. If
$S=K \setminus V$, then  $\inf_S\sum_{j=1}^n|f_j|\geq \delta>0$. 
As usual we  extend  the functions $f_j$  to continuous functions in $C_c(\C)$ 
and denote these extensions by the same symbol. Let 
$U_j= \{z\in \C: |f_j|>0\}$, $j=1,\dots, n$. Then $S\ss\Union_{j=1}^nU_j$. Let 
$\{\alpha_j:j=1,\dots,n\}$ be a $C^\infty_c$-partition
  of  unity subordinate to the open covering  $\{U_1,\dots, U_n\}$  of $S$.
  Then $x_j:=\alpha_jf_j^{-1}\in C_{\dbar,1}(K)$ and 
$$\sum_{j=1}^n x_jf_j=\sum_{j=1}^n  \alpha_j =1 \text{\;on $S$}.$$
Noticing that $f\equiv 0$ on $K\setminus S$, we get 
$$\sum_{j=1}^n (fx_j)f_j=f  \text{\;on $K$},$$
where  $fx_j\in C_{\dbar,1}(K)$.
 \end{proof}

  \section{A new proof of the Nullstellensatz for $A(K)$}\label{2}

Here we  give yet another elementary proof of  the ``Corona Theorem" (or Nullstellensatz)
for the algebra $A(K)$ (see \cite{mr} for the preceding one). We use Wolff's $\dbar$-method
(see for example  \cite[p. 130 and p. 139]{an}). 
  Our main tool will be the following well-known theorem (see \cite{an}, \cite{na}, and \cite{mr}).
  Here $\sigma$ denotes the 2-dimensional Lebesgue measure.
\begin{theorem}\label{cr}
Let $K\ss\C$ be compact,  $f\in C(K)\inter C^1(K^\circ)$ and let  
$$u(z)=-\frac{1}{\pi}\dint_{K} \frac{f(w)}{w-z}d\sigma(w).$$
Then
\begin{enumerate}
\item $u\in C(\hat \C)$, 
\item $u\in C^1(K^\circ)$ and holomorphic outside $K$,
\item  $\dbar u=f$ on $K^\circ$.
\end{enumerate}

\end{theorem}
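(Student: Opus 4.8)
The plan is to prove Theorem~\ref{cr} by the classical Cauchy transform argument, handling the three assertions in turn. First I would reduce to the case of compactly supported data: since $f \in C(K) \cap C^1(K^\circ)$, extend $f$ to a function $\tilde f \in C_c(\C)$ (for the continuity statements (1) and (2) this extension suffices, and for (3) one only needs the values of $f$ on $K$, which are unchanged). Set
$$u(z) = -\frac{1}{\pi}\dint_\C \frac{\tilde f(w)}{w-z}\, d\sigma(w) = -\frac{1}{\pi}\dint_\C \frac{\tilde f(z-\zeta)}{\zeta}\, d\sigma(\zeta),$$
the second form obtained by the substitution $\zeta = w - z$. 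Because $1/\zeta$ is locally integrable in the plane and $\tilde f$ is bounded with compact support, this convolution integral converges absolutely and locally uniformly in $z$; a standard dominated-convergence argument then shows $u$ is continuous on all of $\C$, and since $u(z) = O(1/|z|)$ as $|z|\to\infty$ (the mass of $\tilde f$ is finite), $u$ extends continuously to $\hat\C$ with $u(\infty)=0$. This gives (1). For the holomorphy outside $K$: if $z_0 \notin K$, then on a neighborhood of $z_0$ the kernel $w \mapsto 1/(w-z)$ is holomorphic in $z$ uniformly for $w \in K$, so differentiation under the integral sign applies and $\dbar u = 0$ there.

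The heart of the matter is (3), that $\dbar u = f$ on $K^\circ$. Here I would use the convolution form $u = -\frac{1}{\pi}\left(\tilde f * \frac{1}{\zeta}\right)$ and the fact that $\frac{1}{\pi \zeta}$ is, in the distributional sense, a fundamental solution of $\dbar$ on $\C$, i.e. $\dbar\!\left(\frac{1}{\pi \zeta}\right) = \delta_0$. The cleanest route that only uses ``elementary $\dbar$-calculus'' is to fix $z \in K^\circ$, and differentiate the representation $u(z) = -\frac{1}{\pi}\int \tilde f(z-\zeta)\,\zeta^{-1}\,d\sigma(\zeta)$ under the integral in the $z$-variable (legitimate near $z$ because $\tilde f$ is $C^1$ on a neighborhood of $z$ and $\zeta^{-1}$ is integrable), obtaining $\dbar u(z) = -\frac{1}{\pi}\int (\dbar \tilde f)(z-\zeta)\,\zeta^{-1}\,d\sigma(\zeta)$. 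Then excise a disk $D(0,\e)$, apply the Cauchy--Pompeiu (Cauchy--Green) formula on $\C \setminus D(0,\e)$ — equivalently integrate by parts, moving $\dbar$ off $\tilde f$ and onto $\zeta^{-1}$, which is holomorphic away from $0$ so contributes nothing in the interior — and collect the boundary term on $\partial D(0,\e)$, which tends to $\tilde f(z) = f(z)$ as $\e \to 0$. That $C^1$ regularity of $u$ on $K^\circ$ claimed in (2) comes out of the same computation: the formula $\dbar u = f$ together with $\partial u(z) = -\frac{1}{\pi}\int (\partial \tilde f)(z-\zeta)\,\zeta^{-1}\,d\sigma(\zeta)$ exhibits both Wirtinger derivatives of $u$ as continuous functions on $K^\circ$.

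The main obstacle is the justification in (3): one must be careful that differentiation under the integral sign is valid and that the limit of the boundary integral over $\partial D(0,\e)$ is exactly $f(z)$ with the right constant. The subtlety is purely that $1/\zeta$ is singular at the origin, so the interchange of $\dbar$ and $\int$ cannot be done naively on the original kernel $1/(w-z)$ with $z$ as the variable of differentiation — shifting to the convolution variable $\zeta$ and moving the singularity away from the differentiation point is what makes it routine. Since this theorem is stated as ``well-known'' and attributed to \cite{an}, \cite{na}, and \cite{mr}, the paper will presumably either cite one of these for the detailed estimates or give the short Cauchy--Pompeiu computation outlined above; either way, no genuinely new ideas are needed, and the argument is self-contained modulo the standard local integrability of $1/\zeta$ in $\R^2$.
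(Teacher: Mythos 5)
The paper itself does not prove Theorem \ref{cr}: it is stated as well known, with references to \cite{an}, \cite{na} and \cite{mr}. So your proposal has to be judged against the standard argument in those sources, which is indeed the convolution/Cauchy--Pompeiu argument you outline. Part (1) and the holomorphy of $u$ off $K$ are fine as you present them.

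The genuine gap is in your justification of the key interchange underlying (2) and (3). You write $u(z)=-\frac{1}{\pi}\int\tilde f(z-\zeta)\,\zeta^{-1}\,d\sigma(\zeta)$ and differentiate under the integral ``because $\tilde f$ is $C^1$ on a neighborhood of $z$''. That hypothesis controls the integrand only for $\zeta$ near $0$; differentiating in $z$ requires $(\dbar\tilde f)(z-\zeta)$ to exist and be dominated for essentially all $\zeta$ in the support of the integrand, i.e.\ it requires $\tilde f$ to be $C^1$ on all of $\C$. A Tietze extension of $f\in C(K)\inter C^1(K^\circ)$ is merely continuous and need not be differentiable anywhere outside $K^\circ$, so the formula $\dbar u(z)=-\frac{1}{\pi}\int(\dbar\tilde f)(z-\zeta)\,\zeta^{-1}\,d\sigma(\zeta)$ is not even well defined. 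The missing step is the standard localization: fix $z_0\in K^\circ$, choose $\varphi\in C^\infty_c(\C)$ supported in a disk $D(z_0,2r)\ss K^\circ$ with $\varphi\equiv 1$ on $D(z_0,r)$, and split the density as $f\chi_K=\varphi f+(1-\varphi)f\chi_K$. The first summand lies in $C^1_c(\C)$, so your convolution computation applies to it verbatim and yields a $C^1$ function whose $\dbar$-derivative is $\varphi f$; the second summand vanishes on $D(z_0,r)$, so its Cauchy transform is holomorphic there (nonsingular kernel). Adding the two pieces gives (2) and (3) near $z_0$. Without this cutoff the step you call ``routine'' fails: moving the singularity of the kernel away from the differentiation point does not compensate for the lack of global differentiability of the density. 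Two smaller points: replacing $\int_K f$ by $\int_\C\tilde f$ changes $u$ by the Cauchy transform of $\tilde f\chi_{\C\setminus K}$, which is harmless for (2) and (3) only because that transform is holomorphic on $K^\circ$ --- this should be said explicitly rather than asserted as ``the values on $K$ are unchanged''; and the extended transform is no longer holomorphic outside $K$, so it is right that you treat that clause with the original kernel.
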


\begin{theorem}\label{corona}
Let $K\ss\C$ be a compact set  
and let $f_j\in A(K)$. Then the B\'ezout equation $\sum_{j=1}^n h_jf_j=1$ admits a solution
 $(h_1,\dots, h_n)$ in $A(K)$ if and only if the functions $f_j$ have no common zero on $K$.
\end{theorem}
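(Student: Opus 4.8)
The plan is to prove the nontrivial direction by Wolff's $\dbar$-method, arranged so that the Cauchy-transform result of Theorem~\ref{cr} applies. The ``only if'' direction is trivial: if $\sum_{j}h_{j}f_{j}=1$ on $K$, the $f_{j}$ cannot all vanish at one point of $K$. So assume the $f_{j}$ have no common zero on $K$. By compactness, $D:=\sum_{j=1}^{n}|f_{j}|^{2}\geq\delta>0$ on $K$, and the continuous functions $\ov{f_{j}}/D$ already solve $\sum_{j}(\ov{f_{j}}/D)f_{j}=1$; the obstruction to correcting these directly to holomorphic solutions is that $\dbar(\ov{f_{j}}/D)$ involves $f_{j}'$, which for $f_{j}\in A(K)$ need not extend continuously — or even be bounded — up to $\partial K$ (cf.\ the $\sqrt{1-z}$ example in Section~1), so Theorem~\ref{cr} cannot be applied to the associated $\dbar$-data. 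The remedy is to start from a B\'ezout solution whose coefficients lie in $C_{\dbar,1}(K)$.

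First, Theorem~\ref{bez} furnishes $(\Phi_{1},\dots,\Phi_{n})\in C_{\dbar,1}(K)^{n}$ with $\sum_{j}\Phi_{j}f_{j}=1$. The essential gain, built into the definition of $C_{\dbar,1}(K)$, is that each $\Phi_{j}$ is continuously differentiable in $K^{\circ}$ and that $\dbar\Phi_{j}\in C^{1}(K^{\circ})$ admits a continuous extension to $K$ — exactly the regularity the naive corrector lacked. (Concretely, in the proof of Theorem~\ref{bez} one has $\Phi_{j}=p_{j}/Q$ with $p_{j}$ a polynomial in $z,\ov z$ and $Q=\sum_{j}p_{j}f_{j}$ zero-free; since $\dbar Q=\sum_{j}(\dbar p_{j})f_{j}$ is continuous on $K$ and smooth in $K^{\circ}$, so is $\dbar\Phi_{j}=(\dbar p_{j})/Q-p_{j}(\dbar Q)/Q^{2}$.)

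Next I would run Wolff's antisymmetric correction. Put $\omega_{jk}:=\Phi_{j}\,\dbar\Phi_{k}-\Phi_{k}\,\dbar\Phi_{j}$; this family is antisymmetric, $\omega_{kj}=-\omega_{jk}$, and by the previous step $\omega_{jk}\in C(K)\inter C^{1}(K^{\circ})$. Applying Theorem~\ref{cr} to each $\omega_{jk}$ produces
$$g_{jk}(z)=-\frac{1}{\pi}\dint_{K}\frac{\omega_{jk}(w)}{w-z}\,d\sigma(w),$$
with $g_{jk}\in C(\hat\C)$, holomorphic off $K$, $C^{1}$ in $K^{\circ}$, and $\dbar g_{jk}=\omega_{jk}$ on $K^{\circ}$; linearity of the Cauchy transform forces $g_{kj}=-g_{jk}$. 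Then set $h_{j}:=\Phi_{j}+\sum_{k=1}^{n}g_{jk}f_{k}$. Each $h_{j}$ is continuous on $K$, and on $K^{\circ}$, using $\dbar f_{k}=0$, the identity $\sum_{k}f_{k}\,\dbar\Phi_{k}=\dbar(\sum_{k}\Phi_{k}f_{k})=\dbar 1=0$, and $\sum_{k}\Phi_{k}f_{k}=1$,
$$\dbar h_{j}=\dbar\Phi_{j}+\sum_{k}\omega_{jk}f_{k}=\dbar\Phi_{j}+\Phi_{j}\Bigl(\sum_{k}f_{k}\,\dbar\Phi_{k}\Bigr)-\Bigl(\sum_{k}\Phi_{k}f_{k}\Bigr)\dbar\Phi_{j}=0,$$
so $h_{j}\in A(K)$; and $\sum_{j}h_{j}f_{j}=\sum_{j}\Phi_{j}f_{j}+\sum_{j,k}g_{jk}f_{j}f_{k}=1$, the double sum vanishing by antisymmetry of $(g_{jk})$.

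The step I expect to be the real obstacle is not any estimate but the regularity bookkeeping that makes Theorem~\ref{cr} applicable: the $(0,1)$-data $\omega_{jk}$ must extend continuously to all of $K$, which is exactly what fails for the unmodified corrector $\ov{f_{j}}/D$ and what forces the passage through $C_{\dbar,1}(K)$. The two points needing genuine care are therefore (i) that the $C_{\dbar,1}(K)$-solution $(\Phi_{j})$ really has $\dbar\Phi_{j}$ continuous up to $K$ and $C^{1}$ in $K^{\circ}$, and (ii) the Leibniz-rule manipulation of $\dbar$ in $K^{\circ}$ used in the vanishing computation above; the rest is Wolff's classical cocycle argument.
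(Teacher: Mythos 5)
Your proof is correct and follows essentially the same route as the paper: first obtain a B\'ezout solution in $C_{\dbar,1}(K)$ from Theorem~\ref{bez}, then perform Wolff's antisymmetric $\dbar$-correction, solving the $\dbar$-equations via Theorem~\ref{cr}, which applies precisely because the data lie in $C(K)\inter C^1(K^\circ)$. The only (immaterial) difference is the choice of antisymmetric data: you take $\omega_{jk}=\Phi_j\,\dbar\Phi_k-\Phi_k\,\dbar\Phi_j$, whereas the paper uses the matrix with entries $\bigl(\ov{f_j}\,\dbar x_k-\ov{f_k}\,\dbar x_j\bigr)/|\bs f|^2$; both yield $\dbar h_j=0$ by the same cocycle computation.
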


\begin{proof} Assume that $\sum_{j=1}^n|f_j|\geq \delta>0$ on $K$. Applying Theorem \ref{bez},
 there is a solution $(x_1,\dots, x_n)\in C_{\dbar,1}(K)^n\ss C(K)^n$
to $\sum_{j=1}^n x_jf_j=1$. Now we use  Wolff's method to solve a system of $\dbar$-equations.
Consider  $\bs f=(f_1,\dots, f_n)$  as a row matrix; its transpose is denoted by $\bs f^t$.
Let $|\bs f|^2= \sum_{j=1}^n |f_j|^2$; that is $|\bs f|^2=\ov{\bs f}\bs f^t$.

The B\'ezout equation now reads as $\bs x\bs f^t=1$. It is well-known  
(see for instance \cite[p. 227]{mw})
that any other solution
$\bs u\in C(K)$ to the B\'ezout equation  $\bs u\bs f^t=1$ is given by 
$$\bs u^t= \bs x^t+ H \bs f^t,$$
or equivalently
$$\bs u=\bs x-\bs f H,$$
where $H$ is an $n\times n$ antisymmetric matrix over $C(K)$; that is $H^t=-H$.

Let $$F=\left( \left(\dbar{\bs x}^t\cdot \ov{\bs f}\right)^t- \dbar{\bs x}^t\cdot \ov{\bs f}\right)
\frac{1}{|\bs f|^2}.$$
Since $\bs x\in C_{\dbar,1}(K)^n$, we see that
  $F$ is an antisymmetric  matrix over $C(K)\inter C^1(K^\circ)$. Thus,  by Theorem \ref{cr},
the system $\dbar H= F$ admits  a matrix  solution $H$  over $C(K)\inter C^1(K^\circ)$.
 Note that $H$ can be chosen to be  antisymmetric, too.

 It is now  easy to check that on $K^\circ$, $\dbar \bs u=\bs 0$. In fact
$$
\dbar \bs u=\dbar {\bs x}-  \bs f\cdot \dbar H=\dbar{\bs x}-\bs f\cdot\left( \ov{\bs f}^t \cdot \dbar{\bs x} -\dbar{\bs x}^t \cdot \ov{\bs f}\right)\frac{1}{|\bs f|^2}
$$
$$= \frac{(\bs f \cdot \dbar {\bs x}^t )\; \ov{\bs f}}{|\bs f|^2}=
  \frac{\bigl(\dbar( \bs f \cdot\bs x^t)\bigr) \;
\ov{\bs f}}{|\bs f|^2}=
\frac{\bigl(\dbar( \bs x \cdot\bs f^t)^t\bigr)\, \ov{\bs f}}{|\bs f|^2}=\bs 0
$$

 Thus $u=\bs x-\bs f  H\in A(K)$. Hence $\bs u$ is the solution to the
B\'ezout equation in $A(K)$.
\end{proof}

  \section{The principal ideal case}
  
In this section we consider  the   following division problems:
  let $A$ be one of the algebras $C^m(K), A^m(K)$ and $C_{\dbar,1}(K)$. 
  Determine the best constant $N\in \N$ such that $|f|\leq |g|$
  implies $f^N\in I_A(g)$; that is $g$ divides $f^N$ in $A$.  Note that this is the  ($n=1$)-case  of  Problem \ref{N=3}. 
  
  We need to introduce a special class of compacta.
  
  \subsection{ Locally L-connectedness}
  
  \begin{definition}\label{lcon}
 A compact set $K\ss\C $ satisfying $K=\ov{K^{\circ}}$ is said to be {\it locally  L-connected} if for every $z_0\in \partial K$
 there is an open neighborhood $U$ of $z_0$  in $K$  and a constant $L>0$ such that every point 
 $z\in U\inter K^\circ$ can be joined with $z_0$ by a piecewise $C^1$-path $\gamma_z$ 
 entirely contained
 in $K^\circ$ (except for the end-point $z_0$) and such that  
 $$L(\gamma_z)\leq L\;  |z-z_0| ,$$
 where $L(\gamma_z)$ denotes the length of the path $\gamma_z$.
 \end{definition}

 As examples we mention  closed disks,  compact convex sets with interior, and  finite unions of these 
 sets.  Counterexamples, for instance,
 are  comb domains,  spirals  having infinite length and certain disjoint unions of infinitely many disks
 (see below). 
 \begin{figure}[h]
   \hspace{4cm}
   \scalebox{.40} {\includegraphics{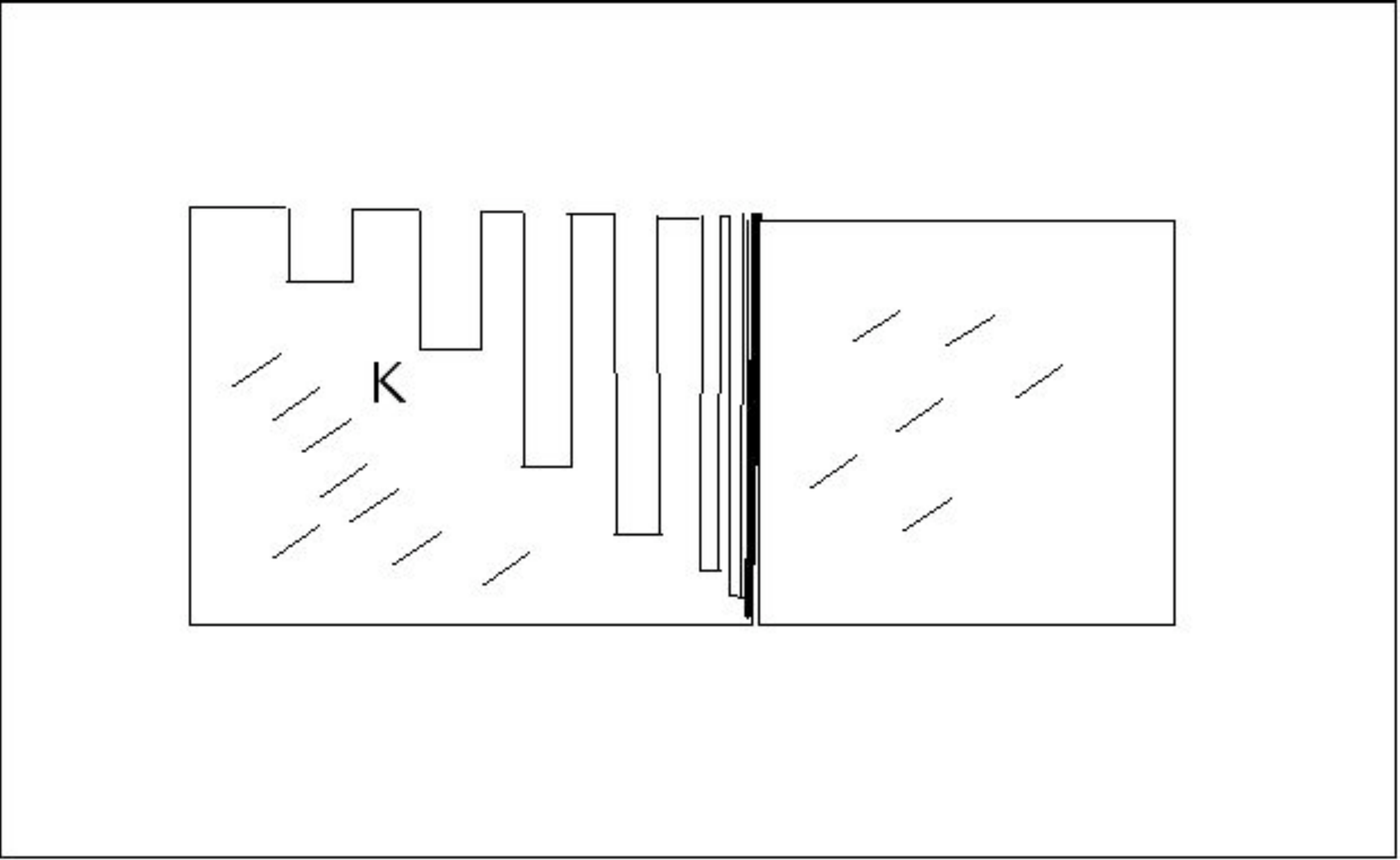}} 
\caption{\label{lconn} The comb domain}
\end{figure}

 \begin{figure}[h]
   \hspace{2cm}
   \scalebox{.30} {\includegraphics{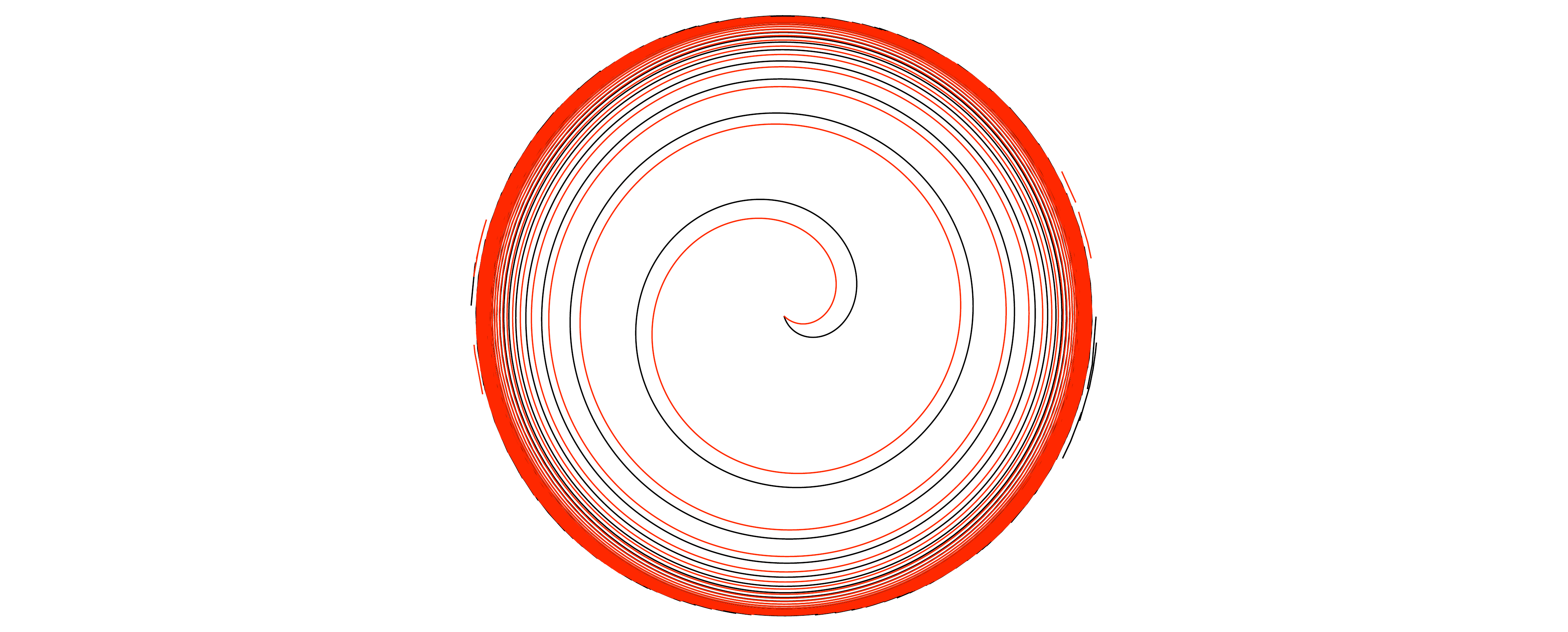}} 
\caption{\label{spir} The spiral domain}
\end{figure}

\begin{figure}[h]
   \hspace{3cm}
   \scalebox{.60} {\includegraphics{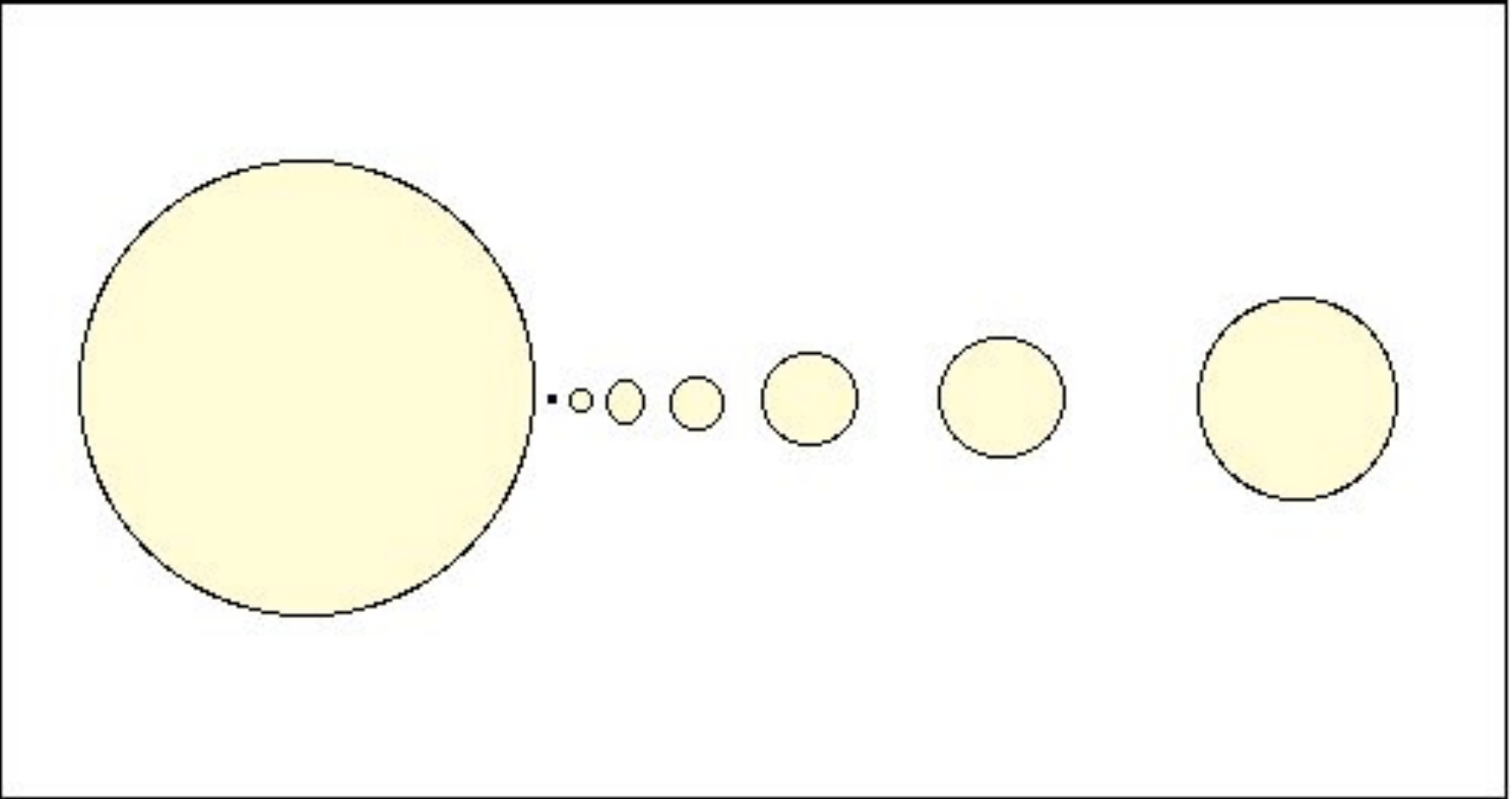}} 
\caption{\label{lcc} A non locally L-connected compactum}
\end{figure}

 

\begin{definition}
A topological space $X$  is said to be {\it locally path-connected} if for every $x\in X$ and  every 
  neighborhood $U\ss X$ of $x$ there exists a neighborhood $V$ of $x$
  such that $V\ss U$ and  for any pair $(u,v)$ of points in $V$ there is a path 
 from $u$ to $v$ lying in $U$. \footnote{ In the ``usual" definition  the path is assumed to lie
 in $V$; these two definitions coincide} 
  \end{definition}

\begin{proposition}
Every locally $L$-connected compactum is locally path-connected.
\end{proposition}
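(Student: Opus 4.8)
The plan is to check the definition of local path-connectedness at an arbitrary point $x\in K$, separating the cases $x\in K^\circ$ and $x\in\partial K$. For $x\in K^\circ$ there is nothing to do: since $K^\circ$ is open in $\C$, any neighbourhood $U$ of $x$ in $K$ contains an open disk $V$ around $x$ with $V\subseteq U\cap K^\circ$, and $V$ is convex, so any two of its points are joined by a segment lying in $V\subseteq U$. The substance is all at a boundary point $x=z_0\in\partial K$.

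So fix $z_0\in\partial K$ and a neighbourhood $U$ of $z_0$ in $K$. It is enough to produce a neighbourhood $V\subseteq U$ of $z_0$ such that every point of $V$ can be joined to $z_0$ by a path contained in $U$, for then any two points of $V$ are joined through $z_0$ by a path in $U$. I would let $U_{z_0}$ and $L=L_{z_0}$ (which we may assume is $\ge 1$) be the data supplied by local $L$-connectedness at $z_0$, choose $r>0$ with $B(z_0,r)\cap K\subseteq U\cap U_{z_0}$, and set $\rho:=r/L\le r$. The key observation is that for an interior point $z\in B(z_0,\rho)\cap K^\circ$, every point $w$ on the path $\gamma_z$ satisfies $|w-z_0|\le L(\gamma_z)\le L\,|z-z_0|<L\rho=r$, since the Euclidean distance from $w$ to the endpoint $z_0$ is bounded by the arclength of the terminal sub-arc of $\gamma_z$, hence by its total length. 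Therefore $\gamma_z\subseteq B(z_0,r)\cap K\subseteq U$, and every interior point of $B(z_0,\rho)\cap K$ is joined to $z_0$ inside $U$.

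Finally I would take $V:=B(z_0,\rho/2)\cap K$ and dispose of its boundary points. Given $z'\in V\cap\partial K$ with $z'\ne z_0$, use $K=\ov{K^\circ}$ to choose an interior point $z''$ with $|z''-z'|<\eta$, where $\eta$ is taken so small that $z''\in U_{z'}\cap K^\circ$, that $\eta<\rho/2$, and that $L_{z'}\,\eta<r/2$ (here $U_{z'},L_{z'}$ are the data of local $L$-connectedness at $z'$). Together with $|z'-z_0|<\rho/2\le r/2$, these conditions force $z''\in B(z_0,\rho)\cap K^\circ$ and, by the same length estimate applied at $z'$, $\gamma'_{z''}\subseteq B(z_0,r)\cap K\subseteq U$, where $\gamma'_{z''}$ joins $z''$ to $z'$. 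The previous paragraph joins $z''$ to $z_0$ inside $U$, so concatenating $\gamma'_{z''}$ reversed with that path gives a path from $z'$ to $z_0$ in $U$; and $z_0$ is trivially joined to itself. Hence every point of $V$ is joined to $z_0$ within $U$. I expect the main (minor) obstacle to be the bookkeeping of the three radii $r$, $\rho=r/L$ and $\eta$ — checking that a single $\eta$ can meet all constraints at once — which succeeds exactly because passing from radius $\rho$ to $\rho/2$ for $V$ leaves a definite amount of room; continuity of the concatenated piecewise-$C^1$ paths needs no comment.
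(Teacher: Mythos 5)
Your proof is correct and follows essentially the same route as the paper's: join every point of a small neighbourhood to the centre $z_0$, use the arclength bound $L(\gamma_z)\le L|z-z_0|$ to confine the paths to a prescribed disk, and handle nearby boundary points $z'$ by picking an interior point close to $z'$ (possible since $K=\ov{K^\circ}$), applying the definition at $z'$, and concatenating. The only differences are bookkeeping choices of radii ($\rho=r/L$, $\rho/2$, $\eta$ versus the paper's $r'<r/2L$ and $r''$).
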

\begin{proof}
Let $z_0\in \partial K=\ov{K^\circ}\setminus K^\circ$. 
Given the  open disk $D(z_0,r)$, we choose the open neighborhood 
 $U$  of $z_0$ in $K$  and $L=L(z_0)>0$ according
to the definition of  local L-connectivity.   In particular, every point 
$z\in D(z_0,r)\inter U\inter K^\circ$
can be joined with $z_0$ by  a curve  $\gamma_{z,z_0}$ contained in $K^\circ\union \{z_0\}$.
Note that  for $z\in U\inter K^\circ$,  $\gamma_{z,z_0}\ss D(z_0, L|z-z_0|)$.  Hence, if we choose $0<r'<r/2L$
so small that $D(z_0,r')\inter K\ss U$, 
then every point $z\in D(z_0,r')\inter K^\circ$ can be joined with $z_0$
by a path $\gamma_{z,z_0}$ contained in 
$$K \inter D(z_0, L|z-z_0|)\ss K\inter D(z_0, r).$$ 
We  still need to show
that every point $z_1\in \partial K\inter  D(z_0, r')$ can be joined with $z_0$ by a path contained in
$K\inter D(z_0,r)$.  By the same argument as above, there is a disk $D(z_1,r'')\ss D(z_0,r')$ 
 such that every point $z\in D(z_1,r'')\inter K^\circ$
can be joined with $z_1$ by a curve $\gamma_{z,z_1}$ contained in 
$$K\inter D(z_1, L(z_1) r'')\ss K\inter D(z_0,  r).$$
Thus the concatenation  of the inverse path $\gamma^{-1}_{z,z_1}$  with $\gamma_{z,z_0}$
joins $z_1$ with $z_0$ within $D(z_0,r)\inter K$.
Hence every point $w\in D(z_0,r')\inter K$ can be joined to $z_0$ by a path contained in $D(z_0,r)\inter K$. If $z_0\in K^\circ$, then the assertion is trivial. 

We conclude that for $z_0\in K$ and $D(z_0,r)$ we find a neighborhood $V$ of $z_0$ in $K$
such that any two points $u,v\in V$ can be joined by a path (passing through $z_0$) 
that stays in $D(z_0,r)$.
Hence $K$ is locally path-connected.
\end{proof}

 We mention that there exist locally connected, path-connected  continua $K$ with $\ov{K^\circ}=K$ that are
 not locally L-connected. For example, let $K$ be the inner  spiral 
 $(\theta+1)^{-1}\leq r(\theta)\leq \theta^{-1}$, 
 $\pi\leq \theta \leq \infty$. See the following figure. 
 Here the point $0\in\partial K$,
 but cannot be joined with any other point $z\in K$ by a rectifiable path.
 We can also take a spiral consisting of ``thick" half-circles with radii $1/n$, \;$n=1,2,\dots$.

 \vspace{5cm}
 \begin{minipage}[b]{6cm}  
 \begin{picture}(6,6) 

 \hspace {1,5cm}{\scalebox{.30}{\includegraphics{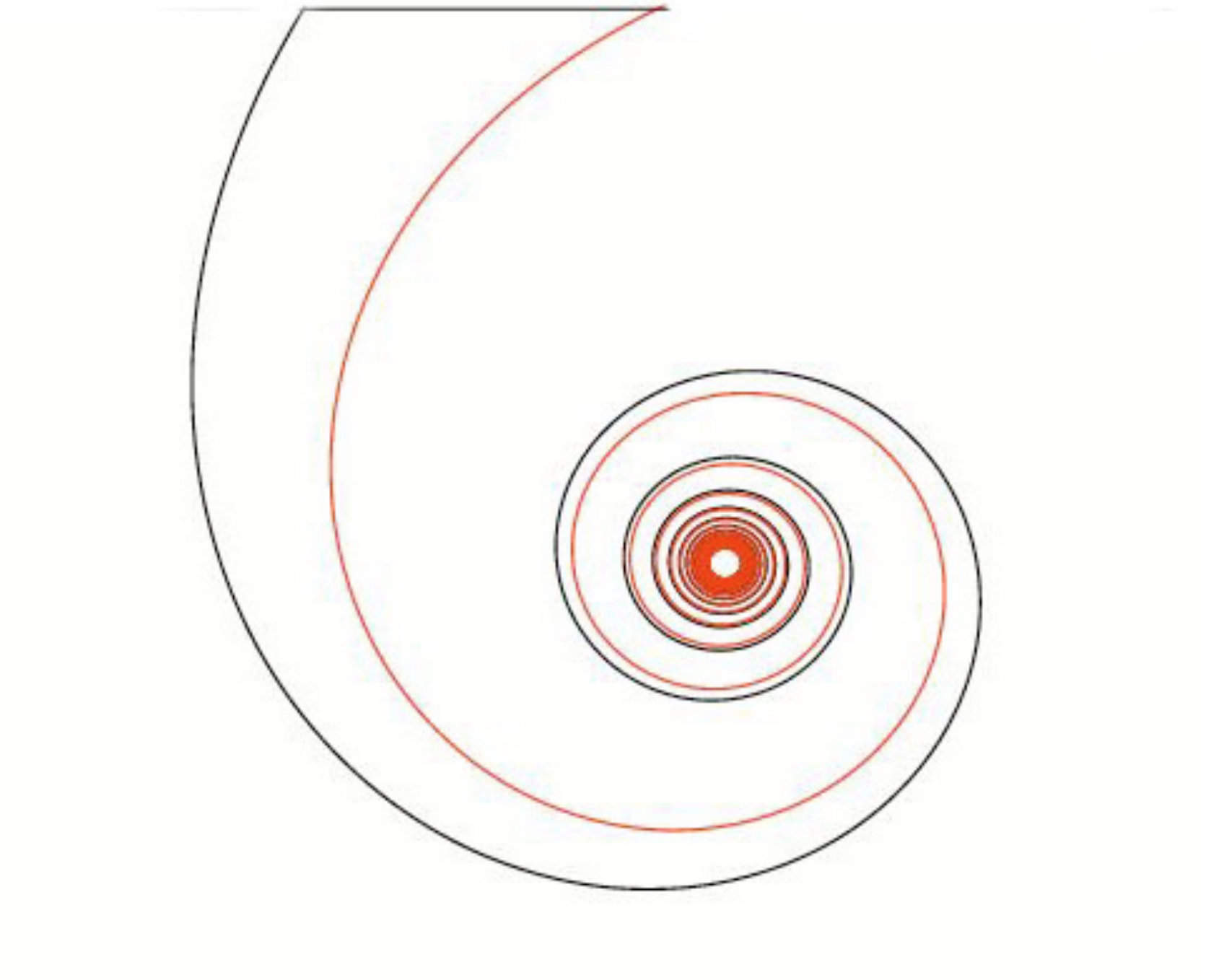}}}

\end{picture}
\end{minipage}
\hfill
\begin{minipage}[b]{9cm}
\begin{picture}(6,6)
\hspace{1cm}
{\scalebox{.40}{\includegraphics{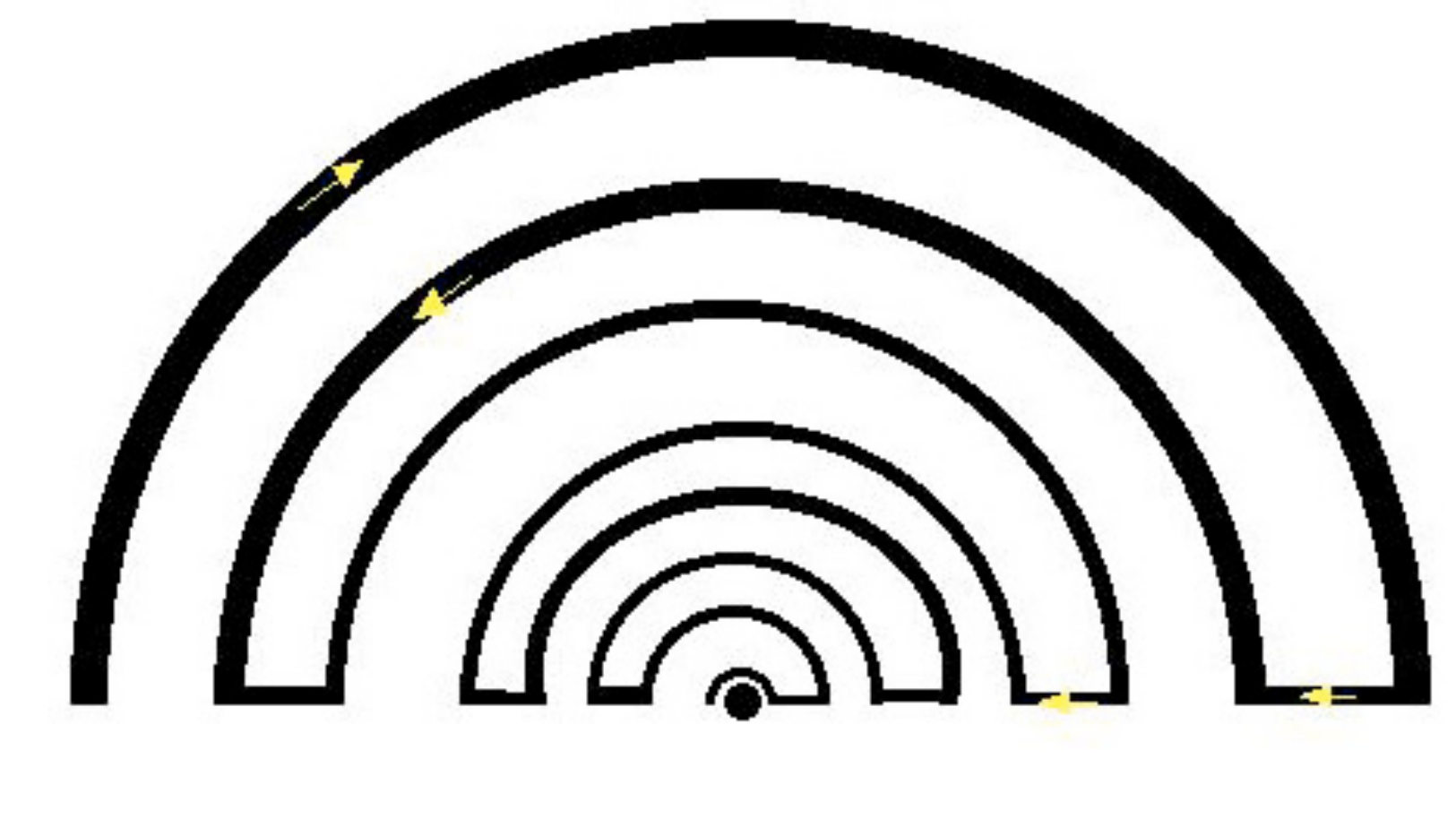}}}
\end{picture}

\end{minipage}

 \subsection{The Taylor formula on the boundary}
  
  We need the following Taylor formula for functions in $A^m(K)$.
  \begin{proposition}\label{taylor}
 (1)  Let $K$ be a  locally $L$-connected compactum. Given $f\in A^m(K)$ and $z_0\in \ov {K^\circ}\setminus K^\circ=\partial K$,  we denote the continuous extension
  of $f^{(j)}$ to $z_0$  by the symbol $f^{(j)}(z_0)$, $(j=0,1,\dots,m)$. Let 
  $$P_m(z,z_0)=f(z_0)+ \frac{1}{1!} f '(z_0)\, (z-z_0)+\dots+ \frac{1}{m!} f^{(m)}(z_0)\, 
  (z-z_0)^m$$
  be the $m$-th Taylor polynomial of $f$ at $z_0$. Then
  $$f(z)=P_m(z,z_0)+ R_m(z),$$
  where  $R_m\in A^m(K)$, $R_m^{(j)}(z_0)=0$ for $j=0,1,\dots, m$ and
  $$\mbox{$R_m^{(j)}(z)=\oh(z-z_0)^{m-j}$  as $z\to z_0$}.$$
  
  Moreover, 
  \begin{equation}\label{abl}
\lim_{z\to z_0\atop  z\in K^\circ} f '(z)=
   \lim_{z\to z_0 \atop z\in K\setminus\{z_0\}} \frac{f(z)-f(z_0)}{z-z_0}.
   \end{equation}
   
   (2) If $K$ is not locally L-connected, then  the equality \zit{abl} does not hold in general.
  \end{proposition}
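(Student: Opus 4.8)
The plan is to prove (1) by induction on $m$, reducing everything to the case $m=1$, which contains the essential content, namely the limit formula \zit{abl}. First I would establish \zit{abl}. Fix $z_0\in\partial K$ and let $U$, $L$ be as in the definition of local $L$-connectedness. For $z\in U\inter K^\circ$, integrate $f'$ along the path $\gamma_z$ joining $z$ to $z_0$ inside $K^\circ$ (the integrand extends continuously to the endpoint $z_0$, so the contour integral makes sense); since $f$ is holomorphic on $K^\circ$ we get $f(z)-f(z_0)=\int_{\gamma_z}f'(\xi)\,d\xi$, hence
$$\left|\frac{f(z)-f(z_0)}{z-z_0}-f'(z_0)\right|
=\left|\frac{1}{z-z_0}\int_{\gamma_z}\bigl(f'(\xi)-f'(z_0)\bigr)\,d\xi\right|
\le L\cdot\sup_{\xi\in\gamma_z}|f'(\xi)-f'(z_0)|,$$
using $L(\gamma_z)\le L|z-z_0|$ and $\gamma_z\ss D(z_0,L|z-z_0|)$. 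Because $f'$ extends continuously to $z_0$ and $\gamma_z$ shrinks to $z_0$ as $z\to z_0$, the right-hand side tends to $0$. This proves \zit{abl} for approach within $K^\circ$; the extension to $z\in K\setminus\{z_0\}$ near $z_0$ follows by continuity of $f$ on $K$ together with the previous step applied to interior points close to such $z$ (or directly, since points of $\partial K$ near $z_0$ are limits of interior points and the difference quotient is continuous in $z$).

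For the Taylor expansion, set $R_m(z)=f(z)-P_m(z,z_0)$. Clearly $R_m\in A^m(K)$ since $P_m$ is a polynomial, and $R_m^{(j)}(z_0)=f^{(j)}(z_0)-P_m^{(j)}(z,z_0)|_{z=z_0}=0$ for $j=0,\dots,m$ by the usual computation with the Taylor polynomial. The remaining point is the estimate $R_m^{(j)}(z)=\oh(z-z_0)^{m-j}$. For $j=m$ this is exactly the statement that $R_m^{(m)}$ is continuous at $z_0$ with value $0$, which is true. For $j<m$, I would apply \zit{abl} to the function $R_m^{(j)}\in A^{m-j}(K)$ (more precisely to $R_m^{(j)}(z)/(z-z_0)^{\,?}$ after iterating): repeatedly using that, along the rectifiable path $\gamma_z$ with $L(\gamma_z)\le L|z-z_0|$, integrating a derivative that is $\oh((z-z_0)^{k})$ against $d\xi$ produces a quantity that is $\oh((z-z_0)^{k+1})$, one descends from $j=m$ to $j=0$, obtaining $R_m^{(j)}(z)=\oh((z-z_0)^{m-j})$ at each stage. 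The induction on $m$ packages this: assuming the result for $m-1$ applied to $f'$ gives $f'(z)=P_{m-1}(z,z_0;f')+\tilde R_{m-1}(z)$ with $\tilde R_{m-1}^{(j)}(z)=\oh((z-z_0)^{m-1-j})$; integrating along $\gamma_z$ from $z_0$ to $z$ and matching constants recovers the $m$-th order expansion for $f$ with the claimed remainder bounds.

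For part (2) I would exhibit a non locally $L$-connected compactum $K$ (for instance an inner spiral as in the figure, or a ``chain of disks'' as in Figure \ref{lcc}) together with an explicit $f\in A^1(K)$ — built, say, by patching a holomorphic function that oscillates increasingly as one spirals toward the bad boundary point $z_0$ — for which $\lim_{z\to z_0,\,z\in K^\circ}f'(z)$ exists while the difference quotient $(f(z)-f(z_0))/(z-z_0)$ fails to converge, the point being that connecting $z$ to $z_0$ inside $K^\circ$ requires a path of length disproportionately large compared with $|z-z_0|$, so $\int_{\gamma_z}f'$ is no longer controlled by $|z-z_0|$. I expect the main obstacle to be part (2): making the counterexample genuinely lie in $A^1(K)$ (first derivative extending continuously up to all of $K$, including $z_0$) while still destroying \zit{abl} requires a careful choice of both the domain geometry and the rate of oscillation of $f$; the natural candidate is to take $f$ with $f'$ tending to a constant but $f$ itself accumulating a nonvanishing ``winding'' contribution along the long spiral arcs.
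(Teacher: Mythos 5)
Your treatment of part (1) is essentially the paper's own argument: the identity $f(z)-f(z_0)=\int_{\gamma_z}f'(\xi)\,d\xi$ combined with $L(\gamma_z)\le L|z-z_0|$ and $\gamma_z\ss D(z_0,L|z-z_0|)$ gives \zit{abl}, and the remainder estimates follow by repeated integration along $\gamma_z$, each integration upgrading $\oh((z-z_0)^k)$ to $\oh((z-z_0)^{k+1})$ because the path length and the distance $|\xi-z_0|$ on the path are both $O(|z-z_0|)$. That is exactly the paper's backward induction from $j=m$ down to $j=0$, and it is correct (the only technical point, integrating up to the boundary point $z_0$, is handled in the paper by integrating to a nearby interior point $\tilde z_0$ and passing to the limit, which your parenthetical remark covers).

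Part (2), however, is a genuine gap: you have not produced a counterexample, only a plan that you yourself flag as problematic, and the route you choose (a spiral with an oscillating holomorphic function whose derivative converges while the function accumulates ``winding'') is substantially harder than necessary --- on a spiral $K^\circ$ is connected, so $f$ must be a genuinely non-constant holomorphic function on a connected domain, and arranging $f'\in C(K)$ while destroying \zit{abl} is delicate. The paper sidesteps all of this by taking $K$ to be a \emph{disconnected} union of disks: $D_0=\{|z+1|\le 1\}$ together with $D_n=\{|z-1/n|\le 1/n^3\}$ for $n\ge 3$, which accumulate at $0\in\partial D_0\ss K$. Defining $f=0$ on $D_0$ and $f=1/\sqrt n$ on $D_n$ gives a locally constant, hence holomorphic, function on $K^\circ$ that is continuous on $K$ (since $1/\sqrt n\to 0$), with $f'\equiv 0$ extending continuously to all of $K$; yet $\bigl(f(1/n)-f(0)\bigr)/(1/n-0)=\sqrt n\to\infty$, so the difference quotient has no finite limit and \zit{abl} fails. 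Since (2) is an existence statement (``does not hold in general''), a single such example suffices, and the locally constant construction removes every analytic difficulty you anticipated; you should replace your spiral sketch by an explicit example of this kind, or else actually carry out the spiral construction, which your proposal does not do.
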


  \begin{proof}
  
  (1) We first show equality \zit{abl}. Set $\ell:=\dis\lim_{z\to z_0\atop  z\in K^\circ} f '(z)$.
  Let $U\ss K$ be  the  neighborhood of $z_0$  and $L>0$ the associated constant given by 
   the definition \ref{lcon} of locally L-connectedness.
  
  Given $\e>0$,  choose $\delta>0$ so small that $|f '(\xi)-\ell|<\e/L$ for $|\xi-z_0|<\delta$,
  $\xi\in K^\circ$. 
   For $z\in D(z_0,\delta/L)\inter U\inter K^\circ$,  let $\gamma_{z,z_0}$ be a path in 
 $K^\circ\union \{z_0\}$ joining $z$ with $z_0$ and with length $L(\gamma_{z,z_0})\leq L\,|z-z_0|$.
  Note that  $$\gamma_{z,z_0}\ss D(z_0, L\,|z-z_0|)\ss D(z_0,\delta).$$
  
  Then, by integrating along $\gamma_{z,z_0}$  from $z$ to a point $\tilde z_0$
  close to $z_0$ and passing to the limit,  we obtain
  $$f(z)-f(z_0)=\int_{\gamma_{z,z_0}} f '(\xi) \; d\xi.$$
  Hence
  \begin{eqnarray*}
\left|\frac{f(z)-f(z_0)}{z-z_0}-\ell\right| &\leq& \frac{1}{|z-z_0|}\int_{\gamma_{z,z_0}}
 |f '(\xi)-\ell|\; |d\xi| \\
&\leq &\frac{1}{|z-z_0|}\;\frac{\e}{L}  \; L(\gamma_{z,z_0})\leq \e.
\end{eqnarray*}
This confirms \zit{abl}. \\

Since $f\in A^m(K)$, we obviously have $R_m\in A^m(K)$. Moreover, by taking derivatives in $K^\circ$ and extending them to the boundary,  we see that 
$R_m^{(j)}(z_0)=(f-P_m)^{(j)}(z_0)=0$ for $j=0,1,\dots,m$.  Moreover, 
$R_m^{(m)}(z)=\oh(1)$ for $z\to z_0$. As in the proof of \zit{abl},  for $z\in K^\circ\inter U$,
$$R_m^{(m-1)}(z)= \int_{\gamma_{z,z_0}} R_m^{(m)}(\xi) \;d\xi.$$
Hence, 
$$|R_m^{(m-1)}(z)|\leq L |z-z_0|\; \max_{\xi\in \gamma_{z,z_0}} | R_m^{(m)}(\xi)|=\oh(z-z_0).$$
Also,
 \begin{eqnarray*}
 R_m^{(m-2)}(z)&=& \int_{\gamma_{z,z_0}} R_m^{(m-1)}(\xi) \;d\xi\\
 &=&  \int_{\gamma_{z,z_0}}(\xi-z_0)\;  \frac{R_m^{(m-1)}(\xi)}{\xi-z_0} \;d\xi,
 \end{eqnarray*}
and so
\begin{eqnarray*}
|R_m^{(m-2)}(z)|  &\leq &  \max_{\xi\in \gamma_{z,z_0}}  
\frac{|R_m^{(m-1)}(\xi)|}{|\xi-z_0|} \int_{\gamma_{z,z_0}}|\xi-z_0|\;|d\xi|\\
&\leq &  \oh(1) L(\gamma_{z,z_0})^2=\oh(z-z_0)^2.
 \end{eqnarray*}

Using backward induction, we obtain the assertions  that for $j=m,m-1,\dots, 1,0$
$$R_m^{(j)}(z)=\oh(z-z_0)^{m-j}$$  as $z\to z_0$.\\

  (2) Let $K$ be the union of the disk $D_0=\{z: |z+1|\leq 1\}$ and the disks 
  $$D_n=\{z: |z-1/n|\leq 1/n^3\}, n=3,4,\dots.$$
  Note that $D_n\inter D_m=\emp$
  for $n\not=m$, $n,m\geq 3$, and that $K=\ov{K^\circ}$, but that $K$ is not locally
  path-connected (at the origin).
  Let $f$ be defined as follows: $f=0$ on $D_0$ and $f=1/\sqrt n$  on
  $D_n$. Then $f$ is continuous on $K$, holomorphic on $K^\circ$ and $f'\equiv 0$ on $K^\circ$.
  It is obvious that $f'$ admits a continuous extension to $K$, namely by the constant $0$.
  However, 
  $$\frac{f(1/n)-f(0)}{1/n-0} = \sqrt n\to \infty.$$
  Thus the continuous extension of $f'$ to $0$  is distinct from the  limit of the associated
  differential quotient: 
  $$0\not= \lim_{z\to 0, z\in K^\circ\atop {\rm Re}\; z>0} \frac{f(z)-f(0)}{z-0}=\infty.$$
  \end{proof}
  
  \begin{corollary}\label{lingi}
Let $f\in A^1(K)$, where $K$ is a locally L-connected compactum.
 Then for every $z_0\in\ov {K^\circ}\setminus K^\circ$ there exists 
$F\in A(K)$ with $F(z_0)=f '(z_0)$,   respectively
$H\in A^1(K)$ satisfying  $H(z_0)=H'(z_0)=0$ and  $H(z)=\oh(z-z_0)$,  such that 
\begin{eqnarray}
f(z)&= &f(z_0)+(z-z_0)f '(z_0)+ H(z)\\
&=&f(z_0)+(z-z_0)F(z)
\end{eqnarray}
\end{corollary}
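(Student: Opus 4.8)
The plan is to derive Corollary~\ref{lingi} directly from Proposition~\ref{taylor} applied with $m=1$. Writing the first-order Taylor expansion at $z_0$ gives $f(z)=f(z_0)+(z-z_0)f'(z_0)+R_1(z)$ with $R_1\in A^1(K)$, $R_1(z_0)=R_1'(z_0)=0$, and $R_1(z)=\oh(z-z_0)$ as $z\to z_0$. So the first displayed equality is immediate once we set $H:=R_1$.

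For the second equality, the natural candidate is
$$F(z):=\begin{cases} \dfrac{f(z)-f(z_0)}{z-z_0}, & z\in K\setminus\{z_0\},\\[2mm] f'(z_0), & z=z_0.\end{cases}$$
By construction $f(z)=f(z_0)+(z-z_0)F(z)$ on all of $K$, and $F$ is holomorphic on $K^\circ\setminus\{z_0\}$ as a quotient of holomorphic functions; the point $z_0$, if it lies in $K^\circ$, is a removable singularity since $f$ is holomorphic there, so in fact $F$ is holomorphic on all of $K^\circ$. The only issue is continuity of $F$ on $K$, and the sole delicate point is continuity at $z_0$. If $z_0\in K^\circ$ this is the classical fact that the difference quotient of a holomorphic function extends holomorphically. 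If $z_0\in\partial K$, continuity at $z_0$ is exactly the content of formula~\zit{abl} in Proposition~\ref{taylor}: it says $\lim_{z\to z_0,\, z\in K\setminus\{z_0\}}\frac{f(z)-f(z_0)}{z-z_0}=f'(z_0)=F(z_0)$. One should also check continuity of $F$ at other boundary points $z_1\neq z_0$: near such a point $z-z_0$ is bounded away from $0$, so $F=(f-f(z_0))/(z-z_0)$ is continuous there simply because $f$ is continuous on $K$ and $z\mapsto z-z_0$ is nonvanishing and continuous. Hence $F\in A(K)$ with $F(z_0)=f'(z_0)$.

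Finally, one checks the two representations agree: subtracting, $(z-z_0)F(z)=(z-z_0)f'(z_0)+H(z)$, i.e. $H(z)=(z-z_0)\bigl(F(z)-f'(z_0)\bigr)=(z-z_0)\bigl(F(z)-F(z_0)\bigr)$, which is $\oh(z-z_0)$ precisely because $F$ is continuous at $z_0$; this is consistent with $H=R_1$ and re-proves $H(z)=\oh(z-z_0)$.

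The main (and essentially only) obstacle is the continuity of the difference quotient $F$ at the boundary point $z_0$, and this has already been resolved: it is precisely equation~\zit{abl} of Proposition~\ref{taylor}, whose proof is where local $L$-connectedness of $K$ is genuinely used (to integrate $f'$ along a short rectifiable path inside $K^\circ$). Everything else is bookkeeping. I would therefore present the corollary as a short deduction, emphasizing only the removable-singularity argument at interior $z_0$ and the invocation of \zit{abl} at boundary $z_0$.
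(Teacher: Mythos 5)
Your proposal is correct and is exactly the deduction the paper intends: the corollary is stated without a separate proof precisely because it follows from Proposition~\ref{taylor} with $m=1$ by setting $H:=R_1$ and taking $F$ to be the difference quotient, whose continuity at $z_0$ is formula~\zit{abl}. The only superfluous step is your removable-singularity discussion at an interior $z_0$, since by hypothesis $z_0\in\ov{K^\circ}\setminus K^\circ$ is always a boundary point.
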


{\bf Remark} The function $F$ in the representation  $f(z)=f(z_0)+(z-z_0)F(z)$
 for $f\in A^1(K)$ may not belong to $A^1(K)$ itself. In fact, let ${f(z)=(1-z)^3 S(z)}$,
$z\in \D$,  where
$$S(z)=\exp\left(-\frac{1+z}{1-z}\right)$$
 is the atomic inner function.
 Then $f\in A^1(\ov\D)$ and $f(z)=(1-z)F(z)$ with $F(z)=(1-z)^2 S(z)$. But 
$F\notin A^1(\ov\D)$. \medskip

\subsection{Division in $A^m(K)$}

We first  consider the cases $m=0,1$.

\begin{theorem}\hfill\label{powers}

\begin{enumerate}
\item [a)] If $f,g\in C(K)$ satisfy  $|f|\leq |g|$,
then $f^2\in  I_{C(K)}(g)$.
\item[b)] If $f,g\in C^1(K)$ satisfy $|f|\leq |g|$ and if $Z(g)$ has no cluster points in $K^\circ$,
then $f^3\in  I_{C^1(K)}(g)$.
 \item[c)]   If $f,g\in A(K)$  satisfy $|f|\leq |g|$, then
 $f^2\in  I_{A(K)}(g)$.
 \item[d)]   If $f,g\in A^1(K)$  satisfy $|f|\leq |g|$, then
 $f^3\in  I_{A^1(K)}(g)$.
 \item[e)]   If $f,g\in A^1(K)$ satisfy  $|f|\leq |g|$, then
 $f^2\in  I_{A^1(K)}(g)$  whenever $K$ is locally L-connected.
 \item [f)]  If $f,g\in C_{\dbar,1}(K)$ satisfy $|f|\leq |g|$, and if $Z(g)$ has no cluster points in $K^\circ$, then $f^4\in  I_{C_{\dbar,1}(K)}(g)$.
\end{enumerate}
The powers  2,3,2,3,2,4 (within $\N$) are optimal. 
\end{theorem}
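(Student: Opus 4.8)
The plan for all of (a)--(f) is uniform: given $|f|\le|g|$, put $h:=f^{N}/g$ on $K\setminus Z(g)$ and $h:=0$ on $Z(g)$, with $N$ the claimed exponent. From $|f|\le|g|$ one has $Z(g)\ss Z(f)$ and the pointwise estimate $|h|\le|f|^{\,N-1}$ off $Z(g)$; since $f\equiv0$ on $Z(g)$, this already gives $h\in C(K)$ and $f^{N}=hg$, so the only issue is the smoothness of $h$. I would verify it at three kinds of points. First, at a point of $K^\circ$ where $g\ne0$, or at a boundary point where $g\ne0$: there $h=f^{N}\cdot(1/g)$ locally, with $1/g$ in the algebra (each of $C^{m}(K),A^{m}(K),C_{\dbar,1}(K)$ being inversionally closed), so $h$ inherits the local regularity of $f$ and $g$. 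Second, at a zero $z_{0}$ of $g$ interior to $K$: in the holomorphic cases (c),(d) the (isolated) singularity is removable since $|h|\le\|f\|_\infty$; in (b) and (f), where $Z(g)$ is assumed discrete in $K^\circ$, one estimates directly, using $|f|\le|g|$ and $|f(z)|\le C|z-z_{0}|$ (from $f\in C^{1}$, $f(z_{0})=0$), that $h$ and each of $\nabla h$---and, for $C_{\dbar,1}$, also $\dbar h$ and $\nabla(\dbar h)$, obtained from $\dbar(1/g)=-\dbar g/g^{2}$---vanish at $z_{0}$ at least like $|z-z_{0}|$, which forces differentiability at $z_{0}$ with the expected zero derivative and continuity of the derivative there. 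Third, at a zero $z_{0}$ of $g$ on $\partial K$: here $f(z_{0})=0$, and one checks that the relevant derivative expanded by the quotient rule---e.g.\ $h'=Nf^{N-1}f'/g-f^{N}g'/g^{2}$ in the $A^{m}$-cases---extends continuously to $z_{0}$, with value $0$ whenever $g'(z_{0})=0$, using $f(z_{0})=0$ and boundedness of the derivatives of $f,g$ near $z_{0}$. What distinguishes the six cases is simply how many factors of $|f|\le|g|$ one has available to absorb the powers of $g$ in the denominators of the successive derivatives: roughly one per order of differentiation the algebra controls, and one more for $C_{\dbar,1}(K)$ because of the extra $g^{-2}$ in $\dbar(1/g)$---which is exactly what produces $2,3,2,3,2,4$.

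The delicate case is (e), where we want exponent $2$ for $A^{1}(K)$: then $h'=2ff'/g-f^{2}g'/g^{2}$ is, near a boundary zero $z_{0}$ of $g$ with $g'(z_{0})\ne0$, only bounded---the factors $f/g$ and $(f/g)^{2}$ need not converge---so the third step above breaks down. This is exactly where local L-connectedness is used, via Corollary~\ref{lingi}. I would write $f=(z-z_{0})F$, $g=(z-z_{0})G$ with $F,G\in A(K)$, $F(z_{0})=f'(z_{0})$, $G(z_{0})=g'(z_{0})$, and note that, because $f,g\in A^{1}(K)$, the identities $(z-z_{0})F'=f'-F$ and $(z-z_{0})G'=g'-G$ make these products extend continuously to $z_{0}$ with value $0$. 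Substituting and rearranging,
$$h'=\frac{F^{2}}{G}+\frac{2F(f'-F)}{G}-\frac{F^{2}(g'-G)}{G^{2}},$$
and now, using $|F|\le|G|$ (inherited from $|f|\le|g|$), each of the three terms has a limit as $z\to z_{0}$---all three equal to $0$ when $g'(z_{0})=0$, since then $F(z_{0})=0$ as well. Hence $h'$ has a limit at every boundary point, and a short density argument (legitimate because $K=\ov{K^\circ}$) upgrades this to continuity of the extension on $K$, so $h\in A^{1}(K)$. I expect this rearrangement, and verifying that nothing is lost at points where $g'(z_{0})=0$, to be the main obstacle; Proposition~\ref{taylor}(2) already indicates that the L-connectedness hypothesis cannot be dropped.

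For optimality I would, in each algebra, produce $f,g$ with $|f|\le|g|$ but $f^{N-1}\notin I_{A}(g)$. On $\ov\D$: for $C(K)$ take $g(z)=z$, $f(z)=|z|$, so that $f^{2}/g=\ov z\in C(\ov\D)$ but $f/g=\ov z/|z|$ is discontinuous at $0$; for $A(K)$ take $g=1-z$, $f=(1-z)S$ with $S(z)=\exp\bigl(-\tfrac{1+z}{1-z}\bigr)$ the atomic inner function, so $f^{2}/g=(1-z)S^{2}\in A(\ov\D)$ while $f/g=S\notin A(\ov\D)$; for $A^{1}(K)$ on the L-connected $\ov\D$ take $g=1-z$, $f=\tfrac14(1-z)^{3}S$, using the Remark after Corollary~\ref{lingi} that $(1-z)^{2}S\notin A^{1}(\ov\D)$ but $(1-z)^{5}S^{2}\in A^{1}(\ov\D)$; for $C^{1}(K)$ and $C_{\dbar,1}(K)$ take $g(z)=z$ and $f(z)=x=\operatorname{Re}z$, since $x^{2}/z$ is not $C^{1}$ at $0$ (so $f^{2}\notin I_{C^{1}}(g)$) while $x^{3}/z$ is $C^{1}$, and $\dbar(x^{3}/z)=\tfrac32\,x^{2}/z\notin C^{1}(\D)$ (so $f^{3}\notin I_{C_{\dbar,1}}(g)$) while $\dbar(x^{4}/z)=2x^{3}/z\in C^{1}(\D)$. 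Finally, that $3$ rather than $2$ is optimal for $A^{1}(K)$ in general is seen by leaving the L-connected setting: for the chain of disks $K=\{|z+1|\le1\}\cup\bigcup_{n\ge3}\{\,|z-\tfrac1n|\le n^{-3}\,\}$ of Proposition~\ref{taylor}(2), take $g(z)=z$ and $f$ equal to $0$ on the large disk and to the constant $(n^{2}-1)/n^{3}$ on the $n$-th small disk; then $f^{2}/g$ is continuous on $K$ but its derivative fails to extend continuously to $0$, whereas $f^{3}/g\in A^{1}(K)$.
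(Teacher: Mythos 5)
Your proposal is correct, and its positive half follows the paper's own strategy almost exactly: set $h=f^{N}/g$ off $Z(g)$ and $h=0$ on $Z(g)$, absorb the powers of $g$ in the denominators of the quotient-rule expressions by means of $|f|\leq|g|$, treat isolated interior zeros by removability (holomorphic cases) or the mean value theorem ($C^1$ cases), and invoke Corollary \ref{lingi} for case e). Two genuine divergences are worth recording. In e), your single rearrangement $h'=F^{2}/G+2F(f'-F)/G-F^{2}(g'-G)/G^{2}$, combined with $|F|\leq|G|$ and $f'-F\to 0$, $g'-G\to 0$, treats the subcases $g'(z_0)\neq 0$ and $g'(z_0)=f'(z_0)=0$ uniformly, whereas the paper argues them separately on the raw formula $h'=2(f/g)f'-(f/g)^{2}g'$; your version is slightly cleaner. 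Second, four of your optimality examples differ from the paper's: for a) you take $f=|z|$, $g=z$ (more elementary than the paper's $(1-z)S$, $1-z$); for b) and f) you take $f=\operatorname{Re}z$, $g=z$ instead of $f=z$, $g=\ov z$ --- both work, and your verification that $D(x^{2}/z)$ has ray-dependent limits at $0$ is correct; for d) you reuse the chain-of-disks compactum of Proposition \ref{taylor}(2) with $g=z$ and $f$ locally constant, rather than the paper's sector construction --- here $h'=-f^{2}/z^{2}$ tends to $-1$ along the centers $1/n$ and to $0$ through the big disk, so the example is valid and arguably simpler. The only thin spot is the situation where $g$ vanishes identically on some components of $K^{\circ}$ (relevant to c) and d)): the paper handles it via the decomposition of $K^\circ$ into $(K\setminus Z(g)^{\circ})^{\circ}$ and $Z(g)^{\circ}$ together with $\partial(Z(g)^{\circ})\ss\partial K$, while in your scheme the estimate $|h'|\leq C|g|$ glues the two regimes automatically; this is an omission of detail rather than a gap.
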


\begin{proof}

a) Since the quotient $f/g$ is bounded on $K\setminus Z(g)$,
we just have to put $h=f^2/g$ on $K\setminus Z(g)$ and $h=0$ on $Z(g)$
in order to see that $f^2=gh$, where $h\in C(K)$. In fact, if $z\in\partial Z(g)$,
then $g(z)=0$ and so $|f|\leq |g|$ implies $f(z)=0$, too. Hence, for any sequence $z_n$
in $K\setminus Z(g)$ with $z_n\to z_0$ we see that $f^2/g(z_n)\to 0$. 
Thus $h\in C(K)$ and so $f^2\in I_{C(K)}(g)$. 

\centerline{\rule{6cm}{0,2mm}}\medskip

b) Let $h=f^3/g$   on $K\setminus Z(g)$ and $h=0$ on $Z(g)$.
Then by a),  $h\in C(K)$ and $h\in C^1(K\setminus Z(g))$.
Let $D$ be one of the derivatives $d/dx$ or $d/dy$. 
Then
  $$\mbox{$\dis D h= \frac{3gf^2(D f)-f^3(D g)}{g^2}$  on 
$K^\circ\setminus Z(g)$}.$$

Because $f,g\in C^1(K)$,  $Df|_{K^\circ}$ and $Dg|_{K^\circ}$ extend continuously
 to $ \ov{K^\circ}$. So
$Dh|_{K^\circ\setminus Z(g)}$ extends continuously to $\ov{K^\circ}\setminus Z(g)$.
Moreover, the assumption  $|f|\leq |g|$ implies that   $|Dh|\leq \kappa |g|$ on $K^\circ\setminus Z(g)$.
Hence, $Dh|_{K^\circ\setminus Z(g)}$ extends continuously to $\ov{K^\circ}\inter Z(g)$
(with value 0).
By combining both facts, we conclude that $Dh|_{K^\circ\setminus Z(g)}$ extends
continuously to $\ov {K^\circ}$.  Tietze's theorem now yields the  extension
of $Dh|_{\ov {K^\circ}}$ to a continuous function on $K$.

 If $a\in Z(g)\inter K^\circ$ is an isolated point
 then, by the mean value theorem, $D h (a)$ exists and coincides with  the continuous extension of 
 $Dh|_{K\setminus Z(g)}$  at $a$. Thus $h\in C^1(K^\circ)$.   
 
Putting it  all together, we  conclude that $h\in C^1(K)$.
 Hence $f^3=gh\in I_{C^1(K)}(g)$. \\

  We remark that if $Z(g)\inter K^\circ$ is not discrete, then we are not always able to conclude
  that $h$ is differentiable at points  in $\ov{K^\circ\setminus Z(g)}\inter Z(g)\inter K^\circ$.
\footnote{Note that
 the possibility of a continuous  extension of the partial derivatives to $E=Z(g)$  does not mean
 that the function itself is differentiable at the points in $E$. See Proposition \ref{taylor}(2) or
 just consider the Cantor function $C$
 associated with the Cantor set $E$ in $[0,1]$. Here the derivative of $C$ is zero at every point
 in $\R\setminus E$, $C(0)=0$, $C(1)=1$, $C$ increasing, but  $C$ itself does not belong to $C^1(\R)$.}

\centerline{\rule{6cm}{0,2mm}}\medskip
 
 c) If $f,g\in A(K)$, then $|f|\leq |g|$ implies that every zero of $g$ is a zero of $f$.
By Riemann's singularity theorem, the boundedness of the quotient  $f/g$ around isolated
zeros of $g$ within $K^\circ$  implies that $f/g$, and hence $f^2/g$, are holomorphic.
If $g$ is constantly zero on a  component $\Omega_0$ of $K^\circ$, 
then $f\equiv 0$ on $\Omega_0$, too.   So we have just to define $f^2/g=0$ there.
Using a) and the fact that $ \partial (Z(g)^\circ)\ss \partial K$, 
 we conclude that
$$h(z)=\begin{cases}   \frac{f^2(z)}{g(z)} & \text{if $z\in K\setminus Z(g)$}\\
                                       ~~~0 &\text{if $z\in Z(g)$}
\end{cases}
$$
 belongs to $A(K)$. Hence $f^2\in I_{A(K)}(g)$. 
 
 \centerline{\rule{6cm}{0,2mm}}\medskip

d) 
Let $S=K\setminus Z(g)^\circ$. Since the zeros of the holomorphic function $g$ do not accumulate
at any points in $S^\circ$,    we deduce from b) that  on $S$, $f^3=k g$ for some $k\in C^1(S)$. 
Moreover, $k=0$ and $Dk=0$ on $Z(g)\inter S$.

 If we let $h=k$ on $S$ and $h=0$ on $Z(g)$,
then $h\in C(K)$.  Let 
$$K^\circ=(\Union_n \Omega_n)\union (\Union_j \Omega_j')$$ 
where the $\Omega_n$ are those components of $K^\circ$  containing only 
 isolated zeros of $g$ and where
the $\Omega_j'$ are those components of $K^\circ$  where $g$ vanishes identically.
Note that  $\Omega_n$ and $\Omega_j'$ are open sets.   
Since the quotient $f^3/g$ extends holomorphically at every isolated zero
of $g$, we conclude that $h$ is holomorphic on each of these components. 
Hence $h\in A(K)$. Moreover, 
$$\footnote{Actually we have $\partial(Z(g)^\circ)\ss\partial S=\partial K\sp$\sp}\sp
\partial K=\partial S \union  \partial(\Union_j  \Omega_j').$$
Since $k\in C^1(S)$ and $Dh=0$ on $\Union_j  \Omega_j'$, we deduce that
$Dh$ has a continuous extension to $\partial K$. Consequently,  $h\in A^1(K)$.

\centerline{\rule{6cm}{0,2mm}}\medskip

e) Let $f,g\in A^1(K)$. Define $h$ by
$$h(z)=\begin{cases} \frac{f^2(z)}{g(z)} & \text{if $z\in K\setminus Z(g)$}\\
                                     0 & \text{if $z\in Z(g)$.}
\end{cases}
$$

 Then, by c), $h\in A(K)$. Since $K$ is locally L-connected, 
and so $K=\ov{K^\circ}$, we may apply Corollary \ref{lingi}. Hence,
 for every $z_0\in\partial K$ there
exists $F,G\in  A(K)$ 
such that $f(z)=f(z_0)+(z-z_0)F(z)$ and $g(z)=g(z_0)+{(z-z_0)G(z)}$. In particular, if $g(z_0)=0$,
$|f|\leq |g|$ implies that  $f(z_0)=0$ and  $|F|\leq |G|$. If additionally $g'(z_0)=0$, then 
$G(z_0)=F(z_0)=0$; hence $f '(z_0)=0$, too.  
 Now, for $z\in K^\circ\setminus Z(g)$,
\begin{eqnarray}\label{deri}
\partial\left(\frac{f^2(z)}{g(z)}\right)&= &\frac{2f(z)g(z) f '(z)-f^2(z)g'(z)}{g^2(z)}\\
&=&2\frac{f(z)}{g(z)} f '(z) - \left(\frac{f(z)}{g(z)}\right)^2 g'(z).
\end{eqnarray}

{\bf Case 1}
Let $z_0\in \ov{K\setminus Z(g)}\inter Z(g)$. This means  that  $z_0$ is in the
boundary of $Z(g)$ with respect to the topological space $K$. Note also that by 
our global assumption,
$z_0\in\partial K$.
 
{\bf 1.1} If $g'(z_0)=f '(z_0)=0$, then, by (\ref{deri}), the boundedness of the quotient 
$(f/g)|_{\ov{K^\circ}\setminus Z(g)}$ implies 
that $\left(\partial\bigl(\frac{f^2(z)}{g(z)}\bigr)\right)\bigr|_{\ov{K^\circ}\setminus Z(g)}$ admits a continuous extension to $z_0$.

{\bf 1.2} If $g'(z_0)\not=0$, then $G(z_0)\not=0$. Hence 
$$\frac{f(z)}{g(z)}=\frac{(z-z_0) F(z)}{(z-z_0) G(z)}=\frac {F(z)}{G(z)},$$
is continuous at $z_0$. Thus, by (\ref{deri}), the continuity of
$f'$ and $g'$ implies that $\left(\partial\bigl(\frac{f^2(z)}{g(z)}\bigr)\right)\bigr|_{\ov{K^\circ}\setminus Z(g)}$ admits a
 continuous extension to $z_0$.
 
 {\bf Case 2}  Let $z_0\in {\rm int}\,Z(g)\inter \partial K$, the interior being taken in the topological space  $K$.
 Then there is an open set $U$ in $\C$ containing $z_0$ such  that  $U\inter K\ss Z(g)$.
 Since $K=\ov{K^\circ}$, 
  $V:=U\inter K^\circ\not=\emp$. Moreover 
$g\equiv 0$ on $V$. 
 Note that by the definition of $h$,  $(f^2/g)|_{\ov{K^\circ}\setminus Z(g)}$ 
 has a continuous extension 
 to $K$ with values 0 on $Z(g)$.  Thus the derivative of $h$ is zero on $V$, and 
 the derivative of the null-function $h|_V$
 has  a continuous  extension to $z_0$ with value 0.  
 
 Combining both cases, we conclude that $h\in A^1(K)$ and so $f^2\in I_{A^1(K)}(g)$.
 
\centerline{\rule{6cm}{0,2mm}}\medskip

f) Let $h=f^4/g$   on $K\setminus Z(g)$ and $h=0$ on $Z(g)$. Using b), we  know
that $h=f (f^3/g)|_{K^\circ\setminus Z(g)}$ admits an extension to a function in $C^1(K^\circ)$.
(Here we have used that the zeros of $g$ are isolated in $K^\circ$).
Next we have to consider the second-order derivatives. Since on $K^\circ\setminus Z(g)$
\begin{equation}\label{eine}
\dbar h=  \frac{4 g \,(\dbar f)f^3-f^4 \,\dbar g}{g^2}
 \end{equation}
 we obtain
{\tiny $$
 \partial(\dbar h)=\frac{g^2\bigl[ 4(\partial g) (\dbar f) f^3+ 4g (\partial\dbar f) f^3+ 12g (\dbar f)
 f^2(\partial f)  -4f^3(\partial f)(\dbar g) -f^4 (\partial \dbar g)\bigr] - 8g^2 (\dbar f) f^3 (\partial g)+
 2f^4 g (\dbar g)(\partial g) }{g^4}
 $$
 }
 and {\tiny $$
  \dbar(\dbar h)=\frac{g^2\bigl[ 4(\dbar g) (\dbar f) f^3+ 4g (\dbar\, \dbar f) f^3+ 12g (\dbar f)^2
 f^2  -4f^3(\dbar f)(\dbar g) -f^4 (\dbar \,\dbar g)\bigr] - 8g^2 (\dbar f) f^3 (\dbar  g)+
 2f^4 g (\dbar g)^2}{g^4}.
 $$
 }
 
 Using that $|f|\leq |g|$ we conclude from \zit{eine} that  on $K^\circ\setminus Z(g)$,   
 $$|\dbar h|\leq   4|\dbar f| \; |g|^2 + |\dbar g| \; |g|^2\leq C |g|^2.$$
 Hence $\dbar h|_{K^\circ\setminus Z(g)}$ admits a continuous extension to 
 $\ov{K^\circ}$ with value 0 on $Z(g)\inter \ov{K^\circ}$. Using
 Tietze's theorem we get a continuous  extension to $K$, too. 
 
 To show that $\dbar h$ is in $C^1(K^\circ)$,  we use the formulas above to conclude that
for every $z_0\in \ov{K^\circ\setminus Z(g)}\inter K^\circ$ there is  a small neighborhood 
$U$ of $z_0$ such that on $U\setminus Z(g)$
 $$\max\{|\dbar(\dbar h)|, \; |\partial(\dbar h)|\} \leq C\; |g|.$$
 Thus $\dbar(\dbar h)|_{K^\circ\setminus Z(g)}$ and $\partial(\dbar h)|_{K^\circ\setminus Z(g)}$
 admit continuous extensions to $K^\circ$ with value 0.
 Since the zeros of $g$ within $K^\circ$  are isolated,  we deduce that 
 $\dbar h|_{K^\circ\setminus Z(g)}$ belongs to $C^1(K^\circ)$.
  
  Thus $h\in C_{\dbar,1}(K)$ and so $f^4\in I_{C_{\dbar,1}(K)}(g)$.
 
  \hrulefill \medskip
 
 To show the optimality of the powers, let $K$ be the closed unit disk $\ov\D$ and consider for
 a) and c) the functions $f(z)=(1-z)S(z)$ and $g(z)=1-z$ where 
 $$S(z)=\exp\left(-\frac{1+z}{1-z}\right)$$ is the atomic inner function. 
 Then on $\D$, $f/g=S$ and $|f|\leq |g|$. 
 But $S$  does not admit a continuous extension to $\ov\D$.
 
 \hrulefill \medskip
   
 For b), let $f(z)=z$ and $g(z)=\ov z$. Then the $\dbar$-derivative of
$f^2/g$ outside $0$ does not admit  a continuous extension to $0$; in fact, 
$\dbar (f^2(z)/g(z))=\dbar(z^2/\ov z)=-z^2/\ov z^2$,
a function that is discontinuous at $0$.

 \hrulefill \medskip
 
For e) let $f(z)=(1-z)^3S(z)$ and $g(z)=(1-z)^3$. Then $f,g\in A^1(\ov\D)$,
$|f|\leq |g|$ but $f/g\notin A^1(\ov\D)$.
 
  \hrulefill \medskip
 
For f)  Let $f(z)=z$ and $g(z)=\ov z$. Then the $\partial$-derivative
of $\dbar(f^3/g)$ outside $0$ does not admit  a continuous extension to $0$; in fact,
$$\dbar (f^3(z)/g(z))=\dbar( z^3/\ov z)= -z^3/\ov z^2.$$
Hence 
$$\partial (\dbar (f^3(z)/g(z)))=-\partial (z^3/\ov z^2)=-3 z^2/\ov z^2.$$

  \hrulefill \medskip
 
 For d) we necessarily have to consider a  compactum that is not locally L-connected. 
 Moreover, as the proof of e) shows, the extensions  of $f'|_{K^\circ\setminus Z(g)}$ and 
$g'|_{K^\circ\setminus Z(g)}$ to a point $z_0\in \partial K\inter Z(g)$ cannot both be zero.
 As an example we take the following compact set $K$ (see figure \ref{sec}):
 
 $$K= \{0\} \union \Union_{n=1}^\infty \left\{z\in \C: \frac{1}{2^{2n+1}}\leq |z|  \leq\frac{1}{2^{2n}}: |\arg z|\leq \pi/4\right\}.$$
 
  \begin{figure}[h]
   \hspace{4cm}
   \scalebox{.40} {\includegraphics{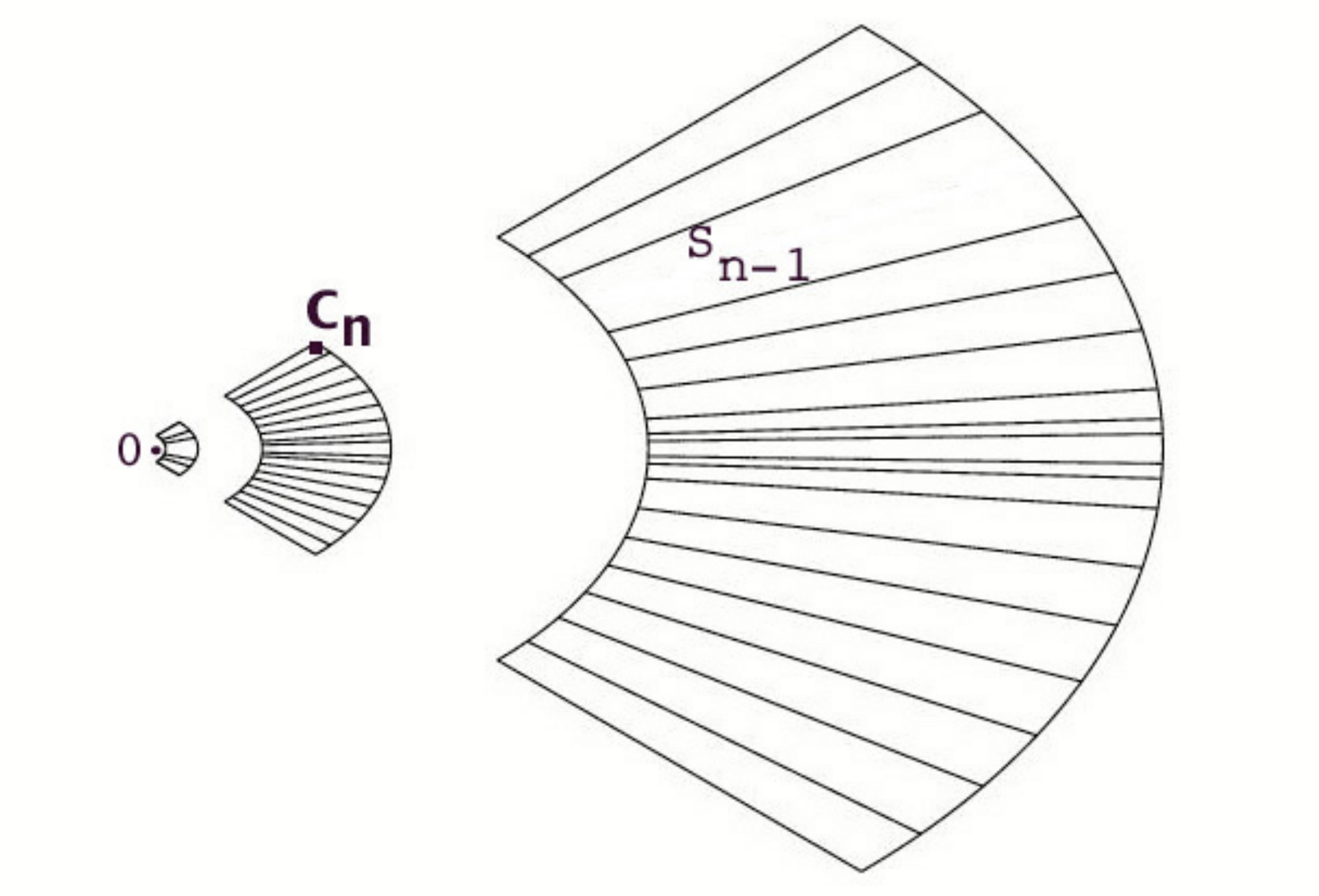}} 
\caption{\label{sec} The sectors}
\end{figure}

Let  $C_n$  be the upper right corner of the sector 
$S_n:=\left\{z\in \C: \frac{1}{2^{2n+1}}\leq |z|  \leq\frac{1}{2^{2n}}: |\arg z|\leq \pi/4\right\}$.
Define the functions $f$ and $g$ by  $f(z)=z$ and 
 $$g(z)= \begin{cases}  \ov C_n &\text{if $z\in S_n$}\\
 0 & \text{if $z=0$}.
 \end{cases}
 $$
 Then $f$ and $g$ belong to  $A^1(K)$ and $|f|\leq |g|$; note that for  $z\in S_n$
 $$|f(z)|\leq \max_{u\in S_n} |u| = 2^{-2n}=|C_n|=|g(z)|.$$
  Since  $g'\equiv 0$ on $K^\circ$, we obtain   
 for  every $z\in K$,  $z\not=0$: 
 $$\Delta(z):=\frac{d}{dz}\left(\frac{f^2(z)}{g(z)}\right)= 2 f '(z) f(z)\frac{1}{g(z)} = 2\frac{z}{g(z)}.$$
 If $z_n=C_n = r_n e^{i\pi/4}$, then  $\Delta(z_n)= 2 e^{2i\pi/4}$; but if
 $z_n=\ov C_n$, then $z_n\in K$ and $\Delta(z_n)=2$.
 Thus $\dis\lim_{z\to 0\atop z\in K\setminus\{0\}} \Delta(z)$ does not exist.
 Hence $f^2/g\notin A^1(K)$.  
\end{proof}

\bigskip

In order to study division in  the algebras $C^m(K)$ and $A^m(K)$, $m\geq 3$,
we use a variant of the Fa\`a di Bruno formula, given in  \cite{moelem}. 

\begin{theorem}\label{faa}
   Let  $$\M^j=\{\mbox{\boldmath$ k$}=(k_1,\dots, k_j)\in (\N^*)^j, \;k_1\geq k_2\geq \cdots\geq k_j\geq 1\}$$
 be the set  of ordered multi-indices in $\N^*=\{1,2,\dots\}$. Then for every $f,g\in C^n(\R)$
  \begin{equation}\label{mo}
   (f\circ g)^{(n)}(x)= \sum_{j=1}^n f^{(j)}(g(x))\biggl(
 \sum_{\kvn} C_{\mbox{\footnotesize \boldmath$k$}}^n\,
g^{(\mbox{\footnotesize \boldmath$k$})}(x)\biggr),
\end{equation}
where
 $$C_{\mbox{\boldmath$ k$}}^n= \frac{\dis{n\choose \mbox{\boldmath$ k$}} }
 {\dis\prod_i n(\mbox{\boldmath$ k$}, i)!}.$$
Here  $g^{(\mbox{\footnotesize \boldmath$k$})}=g^{(k_1)}g^{(k_2)}\cdots g^{(k_j)}$,
${n\choose \mbox{\boldmath$ k$}}$ is  the multinomial coefficient   defined by
 $\dis {n \choose \mbox{\boldmath$ k$}}= \frac{n!}{k_1! k_2!\dots k_j!}$, where
 $|\kv|:=k_1+\dots +k_j=n$, 
and  $n(\mbox{\boldmath$ k$}, i)$ is the number of
times the integer $i$ appears in the  $j$-tuple $\mbox{\boldmath$ k$}$ ($i\in \N^*$ and $\mbox{\boldmath$ k$}\in (\N^*)^j$).
\end{theorem}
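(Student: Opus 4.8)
The plan is to derive \zit{mo} from the classical partition form of the Faà di Bruno formula and then collect the resulting summands according to the multiset of block sizes; the combinatorial coefficient $C_{\mathbf k}^n$ will emerge simply as the number of set partitions of $\{1,\dots,n\}$ with a prescribed shape.

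First I would establish, by induction on $n$, the partition version: for $f,g\in C^n(\R)$,
$$(f\circ g)^{(n)}(x)=\sum_{\pi\in\Pi_n} f^{(|\pi|)}(g(x))\prod_{B\in\pi} g^{(|B|)}(x),$$
where $\Pi_n$ denotes the set of all set partitions of $\{1,\dots,n\}$, $|\pi|$ the number of blocks of $\pi$, and $|B|$ the cardinality of a block $B$. The case $n=1$ is the chain rule. For the passage from $n$ to $n+1$ one differentiates the right-hand side once and applies the product rule: differentiating the factor $f^{(|\pi|)}(g(x))$ yields $f^{(|\pi|+1)}(g(x))\,g'(x)\prod_{B\in\pi}g^{(|B|)}(x)$, which is precisely the summand attached to the partition $\pi\cup\{\{n+1\}\}$; and for each block $B\in\pi$, differentiating the factor $g^{(|B|)}$ yields the summand attached to the partition obtained from $\pi$ by inserting $n+1$ into $B$. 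Every partition $\pi'$ of $\{1,\dots,n+1\}$ arises exactly once: if $\{n+1\}$ is a block of $\pi'$ then $\pi'$ comes from the first mechanism applied to $\pi'\setminus\{\{n+1\}\}$, and otherwise $\pi'$ comes from the second mechanism applied to the partition of $\{1,\dots,n\}$ obtained by deleting $n+1$ from its block.

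Next I would group terms. For $\pi\in\Pi_n$ write $\mathbf k(\pi)=(k_1,\dots,k_j)\in\M^j$ for the cardinalities of the blocks of $\pi$ listed in nonincreasing order; then $j=|\pi|$, $|\mathbf k(\pi)|=k_1+\dots+k_j=n$, and, with $g^{(\mathbf k)}=g^{(k_1)}\cdots g^{(k_j)}$ as in the statement,
$$f^{(|\pi|)}(g(x))\prod_{B\in\pi}g^{(|B|)}(x)=f^{(j)}(g(x))\,g^{(\mathbf k(\pi))}(x).$$
Since this value depends on $\pi$ only through $\mathbf k(\pi)$, the partition form rewrites as
$$(f\circ g)^{(n)}(x)=\sum_{j=1}^n f^{(j)}(g(x))\sum_{\kvn}\bigl(\#\{\pi\in\Pi_n:\mathbf k(\pi)=\mathbf k\}\bigr)\,g^{(\mathbf k)}(x),$$
so it suffices to check $\#\{\pi\in\Pi_n:\mathbf k(\pi)=\mathbf k\}=C_{\mathbf k}^n$. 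This is the standard count of set partitions of a given type: writing $m_i=n(\mathbf k,i)$ for the number of entries of $\mathbf k$ equal to $i$, one lists the $n$ elements ($n!$ orderings), cuts the list into consecutive blocks of sizes $k_1,\dots,k_j$, and divides out the two sources of overcounting, namely the $k_1!\cdots k_j!=\prod_i(i!)^{m_i}$ reorderings inside the blocks and the $\prod_i m_i!$ reorderings among blocks of equal size. Hence
$$\#\{\pi\in\Pi_n:\mathbf k(\pi)=\mathbf k\}=\frac{n!}{(k_1!\cdots k_j!)\prod_i m_i!}=\frac{\binom{n}{\mathbf k}}{\prod_i n(\mathbf k,i)!}=C_{\mathbf k}^n,$$
which is exactly the coefficient in \zit{mo}, completing the proof.

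The individual steps are routine; the only places that need genuine care are the bijective bookkeeping in the inductive step — verifying that each partition of $\{1,\dots,n+1\}$ is produced exactly once, and in particular that partitions with repeated block sizes are not double-counted — and the equal-size-block factor $\prod_i m_i!$ in the final enumeration. A direct induction on $n$ applied to \zit{mo} itself is also possible, since differentiating $g^{(\mathbf k)}$ raises exactly one entry of $\mathbf k$ by one, but one must then verify a recursion for the $C_{\mathbf k}^n$ by hand, which is more delicate, so routing the argument through $\Pi_n$ is the cleaner path.
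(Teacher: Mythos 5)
Your argument is correct and complete. Note, however, that the paper itself offers no proof of Theorem \ref{faa} to compare against: the formula is quoted verbatim from the reference \cite{moelem} and used as a black box in the proofs of Theorems \ref{divi} and \ref{divic}. Your derivation --- first establishing the set-partition form $(f\circ g)^{(n)}=\sum_{\pi\in\Pi_n} f^{(|\pi|)}(g)\prod_{B\in\pi}g^{(|B|)}$ by induction, then collapsing partitions of equal type and identifying $C_{\mbox{\footnotesize \boldmath$k$}}^n$ as the number of set partitions with block-size multiset $\mbox{\boldmath$k$}$ --- is the standard combinatorial route, and all the delicate points are handled: the bijective bookkeeping in the inductive step is sound (no double-counting arises for repeated block sizes, since set partitions are labelled objects, so the issue only surfaces in the final enumeration, where you correctly divide by $\prod_i n(\mbox{\boldmath$k$},i)!$), and the count $n!/\bigl(k_1!\cdots k_j!\prod_i n(\mbox{\boldmath$k$},i)!\bigr)$ matches the stated coefficient exactly. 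The alternative you mention --- a direct induction on \zit{mo} itself via a recursion for the coefficients --- is closer in spirit to what an ``elementary'' self-contained treatment such as \cite{moelem} would do, and avoids introducing set partitions at the cost of a hand-verified identity for $C_{\mbox{\footnotesize \boldmath$k$}}^n$; your choice to route through $\Pi_n$ trades that computation for a cleaner bijection and is entirely legitimate.
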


\begin{theorem}\label{divi}
Let $f,g\in A^m(K)$ and suppose that $|f|\leq |g|$.
Then, for every $n\in\N$ with  $0\leq n\leq m$, the following estimate
 holds on $K^\circ\setminus Z(g)$: 
$$\left(\frac{f^{m+2}}{g}\right)^{(n)}\leq C\, |g|^{m+1-n}.$$
\end{theorem}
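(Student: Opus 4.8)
The plan is to estimate the $n$-th derivative of $h:=f^{m+2}/g$ on $K^\circ\setminus Z(g)$ by a direct application of the generalized Leibniz and Fa\`a di Bruno formulas, keeping careful track of the power of $g$ in every term. Write $h=f^{m+2}\cdot(1/g)$. By the Leibniz rule,
$$h^{(n)}=\sum_{k=0}^n \binom{n}{k}\bigl(f^{m+2}\bigr)^{(k)}\bigl(1/g\bigr)^{(n-k)},$$
so it suffices to bound $\bigl(f^{m+2}\bigr)^{(k)}$ and $\bigl(1/g\bigr)^{(n-k)}$ separately and then combine. (Here derivatives are taken in the complex variable $z$ on $K^\circ$, where both $f$ and $g$ are holomorphic, so the one-variable formulas apply verbatim.)

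The first key step is to show that $\bigl(f^{m+2}\bigr)^{(k)}$ is a sum of terms each of which is a product of $m+2-r$ factors of $f$ and $r$ factors of derivatives $f^{(i_1)},\dots,f^{(i_r)}$ with $i_1+\dots+i_r=k$ and $r\le k$; since $f,g\in A^m(K)$ all these derivatives are bounded on $K$, and since $|f|\le|g|$ each retained factor of $f$ contributes a factor $|g|$. Thus every term of $\bigl(f^{m+2}\bigr)^{(k)}$ is dominated by $C\,|g|^{m+2-r}$ with $r\le k$, hence (as $|g|$ is bounded) by $C\,|g|^{m+2-k}$. The second key step applies Theorem~\ref{faa} with the outer function $t\mapsto 1/t$: one gets
$$\bigl(1/g\bigr)^{(n-k)}=\sum_{j=1}^{n-k}\frac{(-1)^j j!}{g^{j+1}}\Bigl(\sum_{\kvl}C^{\,n-k}_{\kv}\,g^{(\kv)}\Bigr),\qquad \ell:=n-k,$$
so each term is $g^{(\kv)}$ divided by $g^{j+1}$ with $|\kv|=n-k$ and $j\le n-k$; again all $g^{(k_i)}$ are bounded, so $\bigl|\bigl(1/g\bigr)^{(n-k)}\bigr|\le C\,|g|^{-(j+1)}\le C\,|g|^{-(n-k+1)}$ on $K^\circ\setminus Z(g)$ (using $|g|$ bounded to pass from $|g|^{-(j+1)}$ to the worst exponent $-(n-k+1)$).

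Multiplying the two bounds, the generic term of $h^{(n)}$ is dominated by
$$C\,|g|^{\,m+2-k}\cdot|g|^{-(n-k+1)}=C\,|g|^{\,m+1-n},$$
and summing the finitely many terms over $k=0,\dots,n$ preserves this, which is exactly the claimed estimate. The only place requiring genuine care — the main obstacle — is bookkeeping in the first step: one must verify that after differentiating $f^{m+2}$ exactly $k\le n\le m$ times, at least $m+2-k\ge 2$ bare factors of $f$ always survive in every monomial (so the exponent $m+2-r$ never drops below $m+2-k$), which is what lets $|f|\le|g|$ do its work; this is a straightforward induction on $k$ using $(f^p)'=p\,f^{p-1}f'$, but it is the crux of why the exponent $m+2$ in the numerator is the right choice. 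A clean way to organize it is to prove once and for all the schematic identity $\bigl(f^{m+2}\bigr)^{(k)}=\sum_{r=0}^{k}\;\sum_{i_1+\dots+i_r=k,\;i_s\ge1} c_{r,\mathbf i}\,f^{\,m+2-r}\,f^{(i_1)}\cdots f^{(i_r)}$ by induction, and then the estimate follows by taking absolute values.
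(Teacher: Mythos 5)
Your proposal is correct and follows essentially the same route as the paper: a Leibniz expansion of $f^{m+2}\cdot(1/g)$, with Fa\`a di Bruno applied to $1/g=I\circ g$ to get $|(1/g)^{(n-k)}|\leq C|g|^{-(n-k+1)}$, and the expansion of $(f^{m+2})^{(k)}$ (which the paper also obtains via Theorem~\ref{faa} with outer function $z^{m+2}$, rather than by your direct induction) to get $|(f^{m+2})^{(k)}|\leq C|f|^{m+2-k}\leq C|g|^{m+2-k}$, after normalizing $\|g\|_\infty\leq 1$. The exponent bookkeeping in your final combination matches the paper's exactly.
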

\begin{proof}
Let $I(z):=1/z$ and $h(z):=1/g(z)$. Then $h=I\circ g$.   Also,
$$I^{(j)}(z)= j!(-1)^j \frac{1}{z^{j+1}.}$$
By Theorem \ref{faa}, for $1\leq \mu\leq m$, 
$$ h^{(\mu)}=(I\circ g)^{(\mu)}=\sum_{j=1}^\mu (I^{(j)}\circ g)\; \biggl(
\sum_{\kvm} C_{\mbox{\footnotesize \boldmath$k$}}^\mu\,
g^{(\mbox{\footnotesize \boldmath$k$})}
\biggr)$$
$$= \sum_{j=1}^\mu j! (-1)^j \frac{1}{g^{j+1}}\biggl(
\sum_{\kvm} C_{\mbox{\footnotesize \boldmath$k$}}^\mu\,
g^{(\mbox{\footnotesize \boldmath$k$})}
\biggr).$$
We may assume that $||g||_\infty\leq 1$. 
Since the derivatives of $g$ are bounded up to the order $m$, we conclude that on 
$K^\circ\setminus Z(g)$
$$|h^{(\mu)}| \leq C \left|\frac{1}{g}\right|^{\mu+1}.$$
By Leibniz's formula
\begin{equation}\label{kompo}
(f^{m+2}\cdot h)^{(n)}=\sum_{\ell=0}^n {n\choose \ell} h^{(n-\ell)} (f^{m+2})^{(\ell)}.
\end{equation}
Next we have to estimate the derivatives of $f^{m+2}$. Let $p(z)=z^{m+2}$.
Using a second time Theorem \ref{faa}, we obtain for $1\leq \ell\leq m$, 
$$
(f^{m+2})^{(\ell)}= (p\circ f)^{(\ell)}= \sum_{j=1}^\ell (p^{(j)}\circ f) \biggl(
\sum_{\kvl} C_{\mbox{\footnotesize \boldmath$k$}}^\ell\,
f^{(\mbox{\footnotesize \boldmath$k$})}
\biggr).$$
Since $$(z^{m+2})^{(j)}= (m+2)\dots(m+2-j+1)\, z^{m+2-j},$$
we estimate as follows (note that $|f|\leq |g|\leq  1$):
$$
\left|(f^{m+2})^{(\ell)}\right|\leq \sum_{j=1}^\ell C_j |f|^{m+2-j}\leq \tilde C \,|f|^{m+2-\ell}.
$$
In view of the assumption $|f|\leq |g|$, equation \zit{kompo} then yields
$$\left|\left(\frac{f^{m+2}}{g}\right)^{(n)}\right| \leq \tilde C\,  \sum_{\ell=0}^n {n\choose \ell}
\left|\frac{1}{g}\right| ^{n-\ell+1}\;|f|^{m+2-\ell}
$$
$$\buildrel\leq_{|f|\leq |g|}^{}\tilde C\, \sum_{\ell=0}^n  {n\choose \ell} |g|^{m+1-n}=\kappa |g|^{m+1-n}.
$$
\end{proof}

A similar proof applied to the mixed partial derivatives  of $f,g\in C^m(K)$ yields an analogous
result.
\begin{theorem}\label{divic}
Let $f,g\in C^m(K)$ and suppose that $|f|\leq |g|$.
Then, for every $n\in\N$ with  $0\leq n\leq m$, the following estimate
 holds on $K^\circ\setminus Z(g)$: 
$$D_n\left(\frac{f^{m+2}}{g}\right)\leq C\, |g|^{m+1-n},$$
where $D_n=(\partial x)^{j_1}\;(\partial y)^{j_2}$ with $j_1+j_2=n$.
\end{theorem}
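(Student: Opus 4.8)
The plan is to mimic almost verbatim the proof of Theorem \ref{divi}, replacing the single-variable chain rule (Fa\`a di Bruno, Theorem \ref{faa}) by its multivariable counterpart, or more economically, by reducing to the one-variable case along coordinate directions. Concretely, write $h=1/g$ and observe that for a composition $I\circ g$ with $I(w)=1/w$, any mixed partial $D_\mu h$ of total order $\mu\le m$ is, by the chain rule, a finite sum of terms of the shape $g^{-(j+1)}$ times a product of mixed partials of $g$ of total orders summing to $\mu$, with $1\le j\le \mu$. Since $g\in C^m(K)$, all such partials of $g$ are bounded on $K^\circ$, so after normalizing $\|g\|_\infty\le1$ we get $|D_\mu h|\le C|g|^{-(\mu+1)}$ on $K^\circ\setminus Z(g)$, exactly as in the holomorphic case. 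The only genuinely new bookkeeping is that "derivative of order $\mu$" now means a multi-index $(j_1,j_2)$ with $j_1+j_2=\mu$ and one must track which mixed partials of $g$ appear; a clean way to phrase this is to cite the multivariable Fa\`a di Bruno formula, or simply to argue by induction on $\mu$ using the product and quotient rules for $\partial_x,\partial_y$, which avoids quoting any formula not in the paper.

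Next I would handle the numerator $p\circ f$ with $p(w)=w^{m+2}$ in the same spirit: for a mixed partial $D_\ell$ of total order $\ell\le m$, the chain rule expresses $D_\ell(f^{m+2})$ as a sum of terms $f^{m+2-j}$ times products of mixed partials of $f$ of total orders summing to $\ell$, with $1\le j\le\ell$; boundedness of the derivatives of $f$ (since $f\in C^m(K)$) together with $|f|\le|g|\le1$ gives $|D_\ell(f^{m+2})|\le \tilde C|f|^{m+2-\ell}$ on $K^\circ$. Then apply the Leibniz rule for mixed partials: for $D_n=(\partial x)^{j_1}(\partial y)^{j_2}$ with $j_1+j_2=n$, one has $D_n(f^{m+2}h)=\sum$ over sub-multi-indices $(\ell_1,\ell_2)\le(j_1,j_2)$ of binomial-coefficient weights times $D_{(\ell_1,\ell_2)}(f^{m+2})\cdot D_{(j_1-\ell_1,j_2-\ell_2)}h$. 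Bounding each summand by $|g|^{-(n-\ell+1)}|f|^{m+2-\ell}$ where $\ell=\ell_1+\ell_2$, and then using $|f|\le|g|$ to absorb the excess powers, yields each summand $\le C|g|^{m+1-n}$, hence the claimed estimate $D_n(f^{m+2}/g)\le C|g|^{m+1-n}$ on $K^\circ\setminus Z(g)$.

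I would write this up as follows.

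\begin{proof}
The argument is the analogue of the proof of Theorem \ref{divi}, with the single-variable chain rule replaced by its several-variable version (one may, e.g., apply Theorem \ref{faa} along each coordinate direction, or argue directly by induction on the order using the product and quotient rules for $\partial x$ and $\partial y$).

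We may assume $\|g\|_\infty\leq 1$, so that also $|f|\leq|g|\leq 1$ on $K$. Write $h:=1/g$ on $K^\circ\setminus Z(g)$. By the chain rule applied to the composition $z\mapsto 1/g(z)$, any mixed partial derivative of $h$ of total order $\mu$ with $1\leq\mu\leq m$ is a finite sum of terms of the form
$$c\,\frac{1}{g^{\,j+1}}\;\prod_{i} \bigl(D_{\mu_i} g\bigr),\qquad 1\leq j\leq\mu,$$
where the $D_{\mu_i}$ are mixed partials of $g$ whose orders $\mu_i\geq 1$ satisfy $\sum_i \mu_i=\mu$ (there being $j$ factors). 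Since $f,g\in C^m(K)$, all partial derivatives of $f$ and $g$ up to order $m$ are bounded on $K^\circ$. Hence, on $K^\circ\setminus Z(g)$,
\begin{equation}\label{hbound}
\Bigl|D_\mu\Bigl(\tfrac{1}{g}\Bigr)\Bigr|\leq C\,\Bigl|\tfrac{1}{g}\Bigr|^{\mu+1},\qquad 0\leq\mu\leq m.
\end{equation}
Similarly, applying the chain rule to $z\mapsto f(z)^{m+2}$, any mixed partial of $f^{m+2}$ of total order $\ell$ with $1\leq\ell\leq m$ is a finite sum of terms $c\,f^{\,m+2-j}\prod_i (D_{\ell_i}f)$ with $1\leq j\leq\ell$ and $\sum_i\ell_i=\ell$. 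Using the boundedness of the derivatives of $f$ and $|f|\leq 1$, we get on $K^\circ$
\begin{equation}\label{fbound}
\bigl|D_\ell\bigl(f^{m+2}\bigr)\bigr|\leq \tilde C\,|f|^{m+2-\ell},\qquad 0\leq\ell\leq m.
\end{equation}
Now fix $D_n=(\partial x)^{j_1}(\partial y)^{j_2}$ with $j_1+j_2=n\leq m$. By the Leibniz rule for mixed partial derivatives,
$$D_n\Bigl(\frac{f^{m+2}}{g}\Bigr)=\sum_{0\leq\ell_1\leq j_1}\sum_{0\leq\ell_2\leq j_2}\binom{j_1}{\ell_1}\binom{j_2}{\ell_2}\,D_{(\ell_1,\ell_2)}\bigl(f^{m+2}\bigr)\cdot D_{(j_1-\ell_1,\,j_2-\ell_2)}\Bigl(\frac{1}{g}\Bigr),$$
where $D_{(a,b)}=(\partial x)^a(\partial y)^b$. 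For a summand indexed by $(\ell_1,\ell_2)$, set $\ell=\ell_1+\ell_2\leq n$. Combining \zit{fbound} and \zit{hbound},
$$\Bigl|D_{(\ell_1,\ell_2)}\bigl(f^{m+2}\bigr)\cdot D_{(j_1-\ell_1,\,j_2-\ell_2)}\bigl(\tfrac{1}{g}\bigr)\Bigr|\leq \tilde C\,C\,|f|^{m+2-\ell}\,\Bigl|\tfrac{1}{g}\Bigr|^{\,n-\ell+1}\buildrel\leq_{|f|\leq|g|}^{}\kappa\,|g|^{m+1-n}$$
on $K^\circ\setminus Z(g)$. Summing over the finitely many indices yields
$$\Bigl|D_n\Bigl(\frac{f^{m+2}}{g}\Bigr)\Bigr|\leq C\,|g|^{m+1-n}\qquad\text{on }K^\circ\setminus Z(g),$$
as claimed.
\end{proof}

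The step I expect to cause the most friction is not any single estimate but the bookkeeping in \zit{hbound} and \zit{fbound}: making precise that the several-variable chain rule produces exactly the stated shape of terms, with the crucial feature that each term carries a factor $g^{-(j+1)}$ (resp. $f^{m+2-j}$) with $j$ at most the differentiation order, while the remaining factors are bounded. If one prefers a fully formula-driven exposition one can reduce to Theorem \ref{faa} by differentiating along straight segments parallel to the axes, but the inductive argument sketched above is cleaner and keeps the proof self-contained within the paper.
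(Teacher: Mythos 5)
Your proposal is correct and is exactly the argument the paper intends: the paper gives no separate proof of Theorem \ref{divic}, stating only that ``a similar proof applied to the mixed partial derivatives'' of the one in Theorem \ref{divi} yields the result, and your write-up fleshes out precisely that adaptation (multivariable chain rule for $1/g$ and $f^{m+2}$, Leibniz rule for mixed partials, then absorbing powers via $|f|\leq|g|\leq 1$).
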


\begin{theorem} 
Let $K\ss\C$ be a compact set. Then the following assertions hold:
\begin{enumerate}
\item[a)] If $f,g\in A^m(K)$ satisfy $|f|\leq |g|$, then $f^{m+2}\in  I_{A^m(K)}(g)$.
\item[b)] If $f,g\in C^m(K)$ satisfy $|f|\leq |g|$ and if $Z(g)$ has no cluster points in $K^\circ$,
then \\ $f^{m+2}\in  I_{C^m(K)}(g)$.
\end{enumerate}
\end{theorem}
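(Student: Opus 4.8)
The plan is to reduce everything to the quantitative derivative estimates of Theorems \ref{divi} and \ref{divic} together with the bookkeeping already carried out in the proof of Theorem \ref{powers}. Set $h$ to equal $f^{m+2}/g$ on $K\setminus Z(g)$ and $0$ on $Z(g)$. The first task is to show $h$ is well defined and continuous on $K$; this is immediate from part a) of Theorem \ref{powers} (applied to $f^{m+2}/g = (f^2/g)\cdot f^m$, or simply by noting $|f^{m+2}/g|\le |f|^{m+1}$ and that every boundary zero of $g$ is a zero of $f$).

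For case b), first work inside $K^\circ$. Since $Z(g)$ has no cluster point in $K^\circ$, the set $Z(g)\cap K^\circ$ is discrete, so $h$ is genuinely $C^m$ on $K^\circ\setminus Z(g)$, and near an isolated zero $a\in Z(g)\cap K^\circ$ one argues exactly as in the proof of Theorem \ref{powers}b): Theorem \ref{divic} with $n=1,\dots,m$ gives $|D_n h|\le C|g|^{m+1-n}\to 0$ as $z\to a$, so all partials up to order $m$ extend continuously (with value $0$) across $a$, and an iterated mean value theorem argument upgrades these continuous extensions to genuine derivatives at $a$. Hence $h\in C^m(K^\circ)$. Then extend $f$, $g$ and their partials continuously to $K$; Theorem \ref{divic} shows each $D_n h$, $0\le n\le m$, is bounded by $C|g|^{m+1-n}$ on $K^\circ\setminus Z(g)$, hence extends continuously to $\ov{K^\circ}$ with value $0$ on $Z(g)\cap\ov{K^\circ}$ and value the obvious limit elsewhere; Tietze's theorem then yields a continuous extension to all of $K$. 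Therefore $h\in C^m(K)$ and $f^{m+2}=gh\in I_{C^m(K)}(g)$.

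For case a), $f,g\in A^m(K)$, one drops the cluster-point hypothesis because holomorphy supplies it for free. As in Theorem \ref{powers}c)--d): on a component $\Omega$ of $K^\circ$ on which $g\not\equiv 0$, the zeros of $g$ are isolated in $\Omega$, and Riemann's removable singularity theorem (together with the boundedness of $f^{m+2}/g$ from part a) of Theorem \ref{powers}) shows $h$ extends holomorphically across them, so $h$ is holomorphic on $\Omega$; on a component where $g\equiv 0$ one has $f\equiv 0$ and sets $h\equiv 0$. Thus $h\in A(K)$. The derivative estimates now come from Theorem \ref{divi}: for $1\le n\le m$, $|(f^{m+2}/g)^{(n)}|\le C|g|^{m+1-n}$ on $K^\circ\setminus Z(g)$, and using the decomposition $\partial K=\partial S\cup\partial(\bigcup_j\Omega_j')$ from the proof of Theorem \ref{powers}d) (where $S=K\setminus Z(g)^\circ$ and $\Omega_j'$ are the components on which $g$ vanishes), the holomorphic derivatives $h^{(n)}$ extend continuously to $\partial K$ — with value $0$ across boundary points in $Z(g)$ by the estimate, and value $0$ across the closure of the vanishing components trivially. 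Hence $h\in A^m(K)$ and $f^{m+2}=gh\in I_{A^m(K)}(g)$.

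The main obstacle is the same one already flagged in the remark after the proof of Theorem \ref{powers}b) and in Proposition \ref{taylor}(2): continuous extendability of the partial derivatives across $Z(g)$ does not by itself guarantee differentiability at those points. Inside $K^\circ$ in case b) this is handled by the discreteness hypothesis on $Z(g)$ plus the mean value theorem; in case a) it is handled automatically by Riemann's theorem. The one genuinely delicate bit is case a) without any regularity hypothesis on $K$: I would need to be careful that the holomorphic derivatives $h^{(j)}$ really do extend continuously to every boundary point, including points of $\partial K$ that lie in $\ov{Z(g)^\circ}$ and points that are limits of zeros of $g$ from within $K^\circ$ — but the inequality $|h^{(n)}|\le C|g|^{m+1-n}$ forces the limit to be $0$ there regardless, so no path-dependence issue arises on the boundary for case a), in contrast to the locally L-connected subtlety needed to improve the power in the $m=1$ case.
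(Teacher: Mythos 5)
Your argument is correct and follows essentially the same route as the paper's own proof: define $h=f^{m+2}/g$ off $Z(g)$ and $0$ on $Z(g)$, use holomorphy (resp.\ the discreteness of $Z(g)\cap K^\circ$ plus the mean value theorem) to handle interior zeros, and invoke the estimates $|D_n h|\le C|g|^{m+1-n}$ of Theorems \ref{divi} and \ref{divic} to extend all derivatives continuously to $K$ with value $0$ on $Z(g)$. Your write-up is in fact somewhat more explicit than the paper's about the boundary decomposition and the upgrade from continuous extendability of the partials to genuine differentiability, but the underlying mechanism is identical.
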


\begin{proof}
a) Due to holomorphy, the quotient $f^{m+2}/g$ is holomorphic at every isolated zero $z_0$ of $g$
in $K^\circ$,
since $m(f,z_0)\geq m(g,z_0)$, where $m(f,z_0)$ denotes the multiplicity of the zero $z_0$. 
Let 
$$h(z)=\begin{cases} \frac{f^{m+2}(z)}{g(z)} & \text{if $z\in K\setminus Z(g)$}\\
                                     0 & \text{if $z\in Z(g)$.}
\end{cases}
$$
Since $h\equiv 0$ on $Z(g)^\circ$, the fact that $\partial Z(g)^\circ\ss \partial K$ implies
that $h$ is holomorphic on 
$$K^\circ\buildrel=_{}^{*}(K\setminus Z(g)^\circ)^\circ \union Z(g)^\circ.$$
(* was shown in \cite[p. 2219]{moru}).
We may assume that $|g|\leq 1$ on $K$.
By Theorem \ref{divi}, $$|\left(f^{m+2}/g\right)^{(j)}|\leq |g|$$ on $K^\circ\setminus Z(g)$ for every $0\leq j\leq m$. Since $f,g\in A(K)^m$, the derivatives (a priori defined only on $K^\circ$)
 are continuously  extendable to $K$. We denote them by the usual symbol $f^{(j)}$ etc.
Thus $|h^{(j)}|\leq |g|$ on $K\setminus Z(g)$ and so   $h^{(j)}|_{K\setminus Z(g)}$ admits a continuous extension to $K$. \\
b) Similar proof, since we assumed that  the zeros of $f$ and $g$ are isolated.
\end{proof}

\section{The $f^N$-problem in $A(K)$.}

Here we present some sufficient conditions on the generators $f_j$ that guarantee
that $|g|\leq \sum_{j=1}^N |f_j|$ implies that  $g^N\in I_{A(K)}(f_1,\dots, f_n)$
for some $N\in \N$. 

 \begin{lemma}\label{power4}
 Let $A=C^1(K)$ or $C(K)\inter C^1(K^\circ)$.
If  $g,f_j\in  A$ satisfy  $|g|\leq |\bs f|$, where $\bs f=(f_1,\dots, f_n)$,
 then $\left(g^4/|\bs f|^2\right)|_{K\setminus Z(|\bs f|^2)} $ admits an extension to an element in $A$
 whenever $|\bs f|$ has only isolated zeros in $K^\circ$. 
  The power $4$ is best possible (within $\N$). 
 In particular,   $g^4\in I_{A}(f_1,\dots, f_n)$.

\end{lemma}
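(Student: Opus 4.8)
The plan is to follow the pattern of the proofs of Theorem~\ref{powers}(b),(c), with the non-negative $C^1$-function $|\bs f|^2=\sum_{j=1}^n f_j\ov{f_j}$ playing the role of a single divisor; note that $Z(|\bs f|^2)=\Inter_{j=1}^n Z(f_j)$. First I would set
$$h(z)=\begin{cases} \frac{g^4(z)}{|\bs f(z)|^2} & \text{if $z\in K\setminus Z(|\bs f|^2)$}\\ 0 & \text{if $z\in Z(|\bs f|^2)$}\end{cases}$$
and check, exactly as in Theorem~\ref{powers}(a), that $h\in C(K)$: since $|g|\le|\bs f|$ we have $|h|=|g|^4/|\bs f|^2\le|\bs f|^2$ on $K\setminus Z(|\bs f|^2)$, so the quotient is bounded near $Z(|\bs f|^2)$ and tends to $0$ there, which also forces the correct limiting value $0$ at boundary points of $Z(|\bs f|^2)$.

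Next I would show $h\in C^1(K^\circ)$. Off $Z(|\bs f|^2)$ the function $h$ is a quotient of $C^1$-functions with non-vanishing denominator, hence $C^1$, and for $D\in\{\partial/\partial x,\,\partial/\partial y\}$ one has on $K^\circ\setminus Z(|\bs f|^2)$
$$Dh=\frac{4g^3(Dg)}{|\bs f|^2}-\frac{g^4\,D(|\bs f|^2)}{|\bs f|^4}.$$
The key point is that $D(|\bs f|^2)=\sum_j\bigl((Df_j)\ov{f_j}+f_j\ov{(Df_j)}\bigr)$ obeys $|D(|\bs f|^2)|\le 2\sum_j|Df_j|\,|f_j|\le C\,|\bs f|$ by Cauchy--Schwarz, with $C$ depending only on $\sup_K\sum_j|Df_j|^2$; combining this with $|g|\le|\bs f|$ gives
$$|Dh|\le 4\,\frac{|g|^3\,|Dg|}{|\bs f|^2}+C\,\frac{|g|^4}{|\bs f|^3}\le\bigl(4\,||Dg||_\infty+C\bigr)\,|\bs f|\qquad\text{on }K^\circ\setminus Z(|\bs f|^2).$$
Hence $Dh\to 0$ at every point of $Z(|\bs f|^2)$, and since by hypothesis the zeros of $|\bs f|$ in $K^\circ$ are isolated, the mean-value-theorem argument used in the proof of Theorem~\ref{powers}(b) shows that at each such zero $h$ is differentiable with vanishing partials and that $Dh$ is continuous there; thus $h\in C^1(K^\circ)$. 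This already settles the case $A=C(K)\inter C^1(K^\circ)$.

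For $A=C^1(K)$ it remains to extend the partials of $h$ continuously to all of $K$. Since $f_j,g\in C^1(K)$, the partials $Df_j,Dg$ extend continuously to $K$, so the displayed formula shows that $Dh$ extends continuously to $\ov{K^\circ}\setminus Z(|\bs f|^2)$ (where the denominator stays positive), while the estimate $|Dh|\le C|\bs f|$ persists there by continuity and forces a continuous extension with value $0$ to $Z(|\bs f|^2)\inter\ov{K^\circ}$; patching the two gives a continuous extension of $Dh$ to $\ov{K^\circ}$, which Tietze's theorem extends to $K$. With the interior analysis this proves $h\in C^1(K)$. In both cases, from $h\in A$ and
$$g^4=h\,|\bs f|^2=\sum_{j=1}^n (h\,\ov{f_j})\,f_j,$$
in which every $h\,\ov{f_j}\in A$ because $A$ is stable under complex conjugation, we obtain $g^4\in I_A(f_1,\dots,f_n)$.

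For optimality I would take $K=\ov\D$ and $f_1(z)=z$, $f_2(z)=\ov z$, $g(z)=z$: then $|\bs f|^2=2|z|^2$ has only the isolated zero $z=0$ in $\D$, and $|g|=|z|\le\sqrt 2\,|z|=|\bs f|$, yet $g^3/|\bs f|^2=z^2/(2\ov z)$ has $\dbar\bigl(z^2/\ov z\bigr)=-z^2/\ov z^2$, a function of constant modulus $1$ off the origin, hence with no continuous extension to $0$; so $g^3/|\bs f|^2\notin C^1(\D)$ and the exponent $4$ cannot be lowered for either algebra. The delicate term, and the one that forces the exponent to be $4$, is $g^4D(|\bs f|^2)/|\bs f|^4$ in $Dh$: only the special estimate $|D(|\bs f|^2)|\le C|\bs f|$ valid for squared moduli makes it $\le C|g|^4/|\bs f|^3$, and a fourth power of $g$ is precisely what the inequality $|g|\le|\bs f|$ then needs to absorb the factor $|\bs f|^{-3}$.
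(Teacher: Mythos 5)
Your proof is correct and follows essentially the same route as the paper's: the same quotient $h=g^4/|\bs f|^2$, the same derivative formula with the estimate $|D(|\bs f|^2)|\leq C|\bs f|$ combined with $|g|\leq|\bs f|$ to force the partials to vanish at $Z(|\bs f|)$, the same mean-value-theorem argument at the isolated interior zeros, and the same $z^2/\ov z$ example for optimality. You merely spell out a few steps the paper leaves implicit (the Cauchy--Schwarz bound, the Tietze extension, and writing $g^4=\sum_j(h\ov{f_j})f_j$ for the ideal membership).
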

 \begin{proof}
 To show that $4$ is best possible, consider the function $f(z)=\ov z$ and $g(z)=z$.
 Then $z^3/|z|^2=z^2/\ov z$ is not differentiable at $0$ (see the example in Theorem \ref{powers}(b)).
 Now if $D=\partial$ or $\dbar$, then \footnote{Here $\bs a\cdot \bs b$ means the 
 scalar product of the row-vectors $\bs a$ and $\bs b$.}
 on $K^\circ \setminus Z(|\bs f|^2)$
 \begin{eqnarray}
 D\left(\frac{g^4}{|\bs f|^2}   \right)&=&\frac{4|\bs f|^2 g^3 Dg -g^4 (\bs f\cdot D\ov {\bs f}+
 D \bs f \cdot \ov {\bs f})}{|\bs f|^4}\\
 &=& 4g \left( \Bigl(\frac{g}{|\bs f|}\Bigr)^2 Dg \right)- \Bigl( \frac{g}{|\bs f|}\Bigr)^4 
\left( \bs f\cdot D\ov {\bs f}+D \bs f \cdot \ov {\bs f}\right).
  \end{eqnarray}
  If $|\bs f(z)|=0$, then $g(z)=0$ and so the boundedness of the term $g/|\bs f|$ yields the 
  continuous extensions of $ D\left(\frac{g^4}{|\bs f|^2}\right)$ with value 0.
  Since the zeros of $|\bs f|$ are isolated within $K^\circ$, we get from the mean value theorem
  of the differential calculus, that both partial derivatives exist at those zeros  and coincide
  with these extensions.  Hence $(g^4 /|\bs f|^2)|_{K\setminus Z(\bs f)}$ admits the desired extension.
\end{proof}
 
 By a similar proof we have
 
\begin{lemma}\label{power7}
 Let $A=C^1(K)$ or $C(K)\inter C^1(K^\circ)$.
If  $g,f_j\in  A$ satisfy  $|g|^2\leq |\bs f|$, where $\bs f=(f_1,\dots, f_n)$,
 then $\left(g^7/|\bs f|^2\right)|_{K\setminus Z(|\bs f|^2)} $ admits an extension to an element in $A$
  whenever $|\bs f|$ has only isolated zeros in $K^\circ$. 
   In particular,   $g^7\in I_{A}(f_1,\dots, f_n)$.
\end{lemma}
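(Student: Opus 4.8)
The plan is to mimic the proof of Lemma~\ref{power4}, adjusting the exponent bookkeeping to the weaker hypothesis $|g|^2\le|\bs f|$. Write $Q:=|\bs f|^2=\sum_{j=1}^n f_j\,\ov{f_j}$, so that $Z(Q)=\Inter_{j=1}^n Z(f_j)$ and $|g|^2\le|\bs f|$ forces $g\equiv 0$ on $Z(Q)$. The two inequalities that will carry the whole argument are immediate consequences of $|\bs f|\ge|g|^2$: namely $|\bs f|^2\ge|g|^4$ and $|\bs f|^3\ge|g|^6$. Define $h:=g^7/Q$ on $K\setminus Z(Q)$ and $h:=0$ on $Z(Q)$. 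Away from $Z(Q)$, $h$ is a quotient of elements of $A$ with nonvanishing denominator, hence lies in $A$ locally; near a point of $Z(Q)$ one has $|h|=|g|^7/|\bs f|^2\le|g|^7/|g|^4=|g|^3$, which tends to $0$ since $g$ is continuous and vanishes there, so $h\in C(K)$.

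For the first derivatives, I would compute, for $D=\partial$ or $\dbar$ (equivalently $\partial/\partial x$, $\partial/\partial y$), on $K^\circ\setminus Z(Q)$:
$$D\!\left(\frac{g^7}{|\bs f|^2}\right)=\frac{7|\bs f|^2 g^6\,(Dg)-g^7\bigl(\bs f\cdot D\ov{\bs f}+D\bs f\cdot\ov{\bs f}\bigr)}{|\bs f|^4}.$$
The first numerator term is dominated by $7\,|Dg|\,|g|^6/|\bs f|^2\le 7\,|Dg|\,|g|^2$; for the second, using Cauchy--Schwarz and the local boundedness of the first derivatives of the $f_j$ one gets $|\bs f\cdot D\ov{\bs f}+D\bs f\cdot\ov{\bs f}|\le C|\bs f|$, whence that term is dominated by $C\,|g|^7/|\bs f|^3\le C\,|g|$. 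Both bounds tend to $0$ at every point of $Z(Q)$, so $Dh|_{K^\circ\setminus Z(Q)}$ extends continuously to $K^\circ$ (value $0$ on $Z(Q)$), and, in the case $A=C^1(K)$, continuously to $K$ by combining this with the boundary extension furnished by $f,g\in C^1(K)$ and invoking Tietze's theorem as in the proof of Theorem~\ref{powers}. Since $|\bs f|$ has only isolated zeros in $K^\circ$, the mean value theorem shows that at each such zero both partial derivatives of $h$ exist and coincide with these extensions; hence $h\in C^1(K^\circ)$, respectively $h\in C^1(K)$. In either case $h\in A$, which is the asserted extension.

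Finally, on all of $K$ one has $g^7=|\bs f|^2 h=\sum_{j=1}^n f_j\,(\ov{f_j}\,h)$, and since both candidate algebras are closed under complex conjugation and products, $\ov{f_j}\,h\in A$; therefore $g^7\in I_A(f_1,\dots,f_n)$. The one point to watch, and the only real obstacle, is precisely the exponent count: the power $7$ is tuned so that after dividing by $|\bs f|^2$ (for $h$), by $|\bs f|^2$ (for the first numerator term of $Dh$) and by $|\bs f|^3$ (for the second), the estimate $|\bs f|\ge|g|^2$ still leaves a strictly positive power of $g$, and it is this surviving factor of $g$, vanishing on $Z(Q)$, that produces the continuous value-$0$ extensions; a literal transcription of the $|g|\le|\bs f|$ proof of Lemma~\ref{power4} with exponent $4$ would not close.
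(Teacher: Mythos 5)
Your proof is correct and is essentially the argument the paper intends: the paper proves Lemma~\ref{power7} only by the remark ``by a similar proof'' referring to Lemma~\ref{power4}, and your write-up is precisely that proof with the exponent bookkeeping redone for $|g|^2\le|\bs f|$ (the bounds $|g|^7/|\bs f|^2\le|g|^3$, $|g|^6/|\bs f|^2\le|g|^2$, $|g|^7/|\bs f|^3\le|g|$, followed by the mean value theorem at the isolated zeros). Your closing observation that $7$ is the smallest exponent for which this scheme closes is also consistent with the paper, which leaves the optimality of $7$ open.
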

We don't know whether   the power 7 is optimal.
 
Now we use again the convenient  matricial notation from section  \ref{2}.

\begin{proposition}\label{power5}
 Suppose that  $\bs f=(f_1,\dots,f_n)\in A(K)^n$    and that  $g\in A(K)$ satisfies  $|g|\leq \sum_{j=1}^n |f_j|$. 
 If there is a solution $\bs x$ in  $C_{\dbar,1}(K)$ to $\bs x \bs f^t=g$, then
 there exists $\bs u=(u_1,\dots, u_n)\in A(K)^n$ such that $\bs u \bs f^t= g^5$
  whenever $|\bs f|$ has only isolated zeros in $K^\circ$ and
  there exists $\bs v=(v_1,\dots, v_n)\in A(K)^n$ such that $\bs v \bs f^t= g^6$
  whenever $\bs f \in A(K)^n$ is arbitrary.

 \end{proposition}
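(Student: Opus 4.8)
The plan is to run Wolff's $\dbar$-correction from the proof of Theorem~\ref{corona}, but with the correction multiplied by a suitable power of $g$, so that the resulting $\dbar$-data stays smooth in spite of the vanishing of $|\bs f|^2$. First I would normalize: since $\sum_{j=1}^n|f_j|\le\sqrt n\,|\bs f|$, replacing $g$ by $g/\sqrt n$ (and the given $\bs x$ by $\bs x/\sqrt n$, still in $C_{\dbar,1}(K)^n$) we may assume $|g|\le|\bs f|$; this rescaling does not affect whether $g^N\in I_{A(K)}(f_1,\dots,f_n)$. So from now on $\bs x\in C_{\dbar,1}(K)^n$ satisfies $\bs x\bs f^t=g$ and $|g|\le|\bs f|$.

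With a power $N$ to be fixed, I would look for $\bs u=g^{N-1}\bs x-\bs f H$, where $H$ is an $n\times n$ antisymmetric matrix over $C(K)\inter C^1(K^\circ)$. Since $H^t=-H$ one has $\bs f H\bs f^t=0$, hence $\bs u\bs f^t=g^{N-1}(\bs x\bs f^t)=g^N$ automatically; and, as $g$ and the $f_j$ are holomorphic in $K^\circ$, $\dbar\bs u=g^{N-1}\dbar\bs x-\bs f\,\dbar H$. So it suffices to solve $\bs f\,\dbar H=g^{N-1}\dbar\bs x$ with $H$ antisymmetric over $C(K)\inter C^1(K^\circ)$. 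Exactly as in Theorem~\ref{corona} I would set $\dbar H=F$ with
$$F:=g^{N-1}\Bigl(\bigl(\dbar\bs x^t\cdot\ov{\bs f}\bigr)^t-\dbar\bs x^t\cdot\ov{\bs f}\Bigr)\frac1{|\bs f|^2}$$
on $K\setminus Z(\bs f)$ and $F:=0$ on $Z(\bs f)$; one checks $F^t=-F$ and, using $\bs f\cdot\dbar\bs x^t=\dbar(\bs x\bs f^t)=\dbar g=0$, that $\bs f F=g^{N-1}\dbar\bs x$. Granting $F\in C(K)\inter C^1(K^\circ)$, Theorem~\ref{cr} applied entrywise furnishes a matrix $H_0$ over $C(K)\inter C^1(K^\circ)$ with $\dbar H_0=F$, and then $H:=\tfrac12(H_0-H_0^t)$ is antisymmetric and still solves $\dbar H=F$. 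The resulting $\bs u$ lies in $(C(K)\inter C^1(K^\circ))^n$, satisfies $\dbar\bs u=\bs 0$ on $K^\circ$, hence lies in $A(K)^n$, with $\bs u\bs f^t=g^N$.

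Everything then reduces to the regularity statement $F\in C(K)\inter C^1(K^\circ)$, and this is where the hypothesis $\bs x\in C_{\dbar,1}(K)^n$ (so that $\dbar\bs x\in C^1(K^\circ)$ and extends continuously to $K$), together with $|g|\le|\bs f|$, enters. Off $Z(\bs f)$, $F$ is a $C^1$ combination of holomorphic, antiholomorphic and $C^1$ functions over a nonvanishing denominator. At a common zero $z_0$ of $\bs f$ a crude estimate gives $|F|\lesssim|g|^{N-1}/|\bs f|^2\lesssim|g|^{N-3}\to0$, so $F$ extends continuously by $0$ and $F\in C(K)$. For $C^1$-smoothness at an \emph{isolated} zero $z_0\in K^\circ$: near such a point the first derivatives of $g$, of the $f_j$ and of $\dbar\bs x$ are bounded, while $\partial(1/|\bs f|^2)$ and $\dbar(1/|\bs f|^2)$ are $O(1/|\bs f|^3)$, so differentiating $F$ once gives $\partial F,\dbar F=O\!\left(|g|^{N-1}/|\bs f|^3\right)=O\!\left(|g|^{N-4}\right)$ near $z_0$ (a finer count of the cancellations might lower this). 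Taking $N=5$, these tend to $0$, so they extend continuously by $0$, and the mean value theorem argument from the proof of Theorem~\ref{powers}(b) shows that $F$ is $C^1$ at $z_0$. This settles the case in which $|\bs f|$ has only isolated zeros in $K^\circ$. For arbitrary $\bs f$ I would split $K^\circ$ into the components on which $\bs f$ --- hence $g$ --- vanishes identically, where $F\equiv0$ and whose boundary is contained in $\partial K$ (cf.\ \cite[p.~2219]{moru}), and the remaining components, on which $Z(\bs f)$ is discrete by the identity theorem so that the previous analysis applies; one extra power ($N=6$) absorbs the loss incurred near accumulation points of zeros lying on $\partial K$, where the derivatives of $\bs f$ need no longer be bounded.

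The main obstacle is precisely this regularity bookkeeping for $F$: one has to keep track of how many powers of $g$ each differentiation consumes --- one per factor $1/|\bs f|$ produced --- and verify that what remains still vanishes along $Z(\bs f)$; this is what pins down the exponents $5$ and $6$. The other ingredients --- choosing $H$ antisymmetric so that $\bs f H\bs f^t=0$, and the fact that a $C^1$ function on $K^\circ$ killed by $\dbar$ is holomorphic --- are routine.
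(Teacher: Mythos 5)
Your argument is essentially the paper's: it runs Wolff's $\dbar$-correction with the data $Fg^{4}$, whose membership in $C(K)\inter C^1(K^\circ)$ you establish by the same estimates that the paper packages as Lemma~\ref{power4}, and the antisymmetry of $H$ kills $\bs f H\bs f^t$ exactly as in Theorem~\ref{corona}. The only divergence is the arbitrary-$\bs f$ case: the paper applies the isolated-zero case on $S=K\setminus Z(|\bs f|)^\circ$ and then multiplies the resulting $\bs u\in A(S)^n$ by $g$ so that it extends by zero across $\partial\bigl(Z(|\bs f|)^\circ\bigr)\ss\partial K$ --- that gluing is where the sixth power is spent, not (as you suggest) a loss of derivative bounds near boundary accumulation points of $Z(|\bs f|)$, which never enter since $C^1$-regularity is only required in $K^\circ$.
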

 
 \begin{proof}
 We first suppose that $Z(|\bs f|)$ does not admit a cluster point within $K^\circ$. 
 As in Theorem \ref{corona},  we consider on $K\setminus Z(|\bs f|)$ the matrix
 $$F=\left( \left(\dbar{\bs x}^t\cdot \ov{\bs f}\right)^t- \dbar{\bs x}^t\cdot \ov{\bs f}\right)
\frac{1}{|\bs f|^2}.$$

Using the facts that  $\bs x\in C_{\dbar,1}(K)^n$ and   
$g^4\;/\;|\bs f|^2\in C(K)\inter C^1(K^\circ)$ we conclude from Lemma \ref{power4} that
  $F g^4$ extends to  an antisymmetric  matrix over $C(K)\inter C^1(K^\circ)$. Thus,  by Theorem \ref{cr},
the system $\dbar H= F g^4$ admits  a matrix  solution $H$  over $C(K)\inter C^1(K^\circ)$. 
Note that $H$ can be chosen to be   antisymmetric, too.  Now let
$$\bs u= g^4\bs x -\bs f H.$$
Then $\bs u\in C(K)^n\inter  \bigl(C^1(K^\circ)\bigr)^n$. Moreover, 
  on $K^\circ\setminus Z(|\bs f|)$,  $\dbar \bs u=0$ because
$$
\dbar \bs u=g^4\dbar {\bs x}-  \bs f\cdot \dbar H=g^4\dbar{\bs x}-\bs f\cdot\left( \ov{\bs f}^t \cdot \dbar{\bs x} -\dbar{\bs x}^t \cdot \ov{\bs f}\right)\frac{g^4}{|\bs f|^2}$$
$$=g^4\; \frac{(\bs f \cdot \dbar {\bs x}^t )\; \ov{\bs f}}{|\bs f|^2}=
 g^4\;  \frac{\bigl(\dbar( \bs f \cdot\bs x^t)\bigr) \;\ov{\bs f}}{|\bs f|^2}=
 g^4\;  \frac{(\dbar g)\, \ov{\bs f}}{|\bs f|^2}=\bs 0.
$$
Since it is assumed that every point in $Z(|\bs f|)\inter K^\circ$ is an isolated point, the continuity
of $\dbar \bs u$ implies that $\dbar \bs u=0$ on $K^\circ$. Hence $\bs u\in A(K)^n$.
By antisymmetry, \footnote{$\bs f H \bs f^t=(\bs f H \bs f^t)^t=
\bs f H^t \bs f^t=-\bs f H \bs f^t$}  $\bs f H \bs f^t=0$ and so
$$\bs u \bs f^t= g^4 \bs x \bs f^t -\bs f H \bs f^t= g^4 g=g^5.$$\\

Now let $\bs f\in A(K)^n$ be arbitrary. If $|\bs f|$ is identically zero, then nothing is to prove.
 Hence we may assume that $S:=K\setminus Z(|\bs f|)^\circ\not=\emp$.  By the first case,
 there is $\bs u\in A(S)^n$ such that $\bs u\bs f^t =g^5$ on $S$. 
 Since $\partial (Z(|f|)^\circ)\ss\partial K$,  we again have
 $$S^\circ\union   Z(|f|)^\circ= K^\circ.$$
 Hence the vector-valued function
 $$\bs v(z)=\begin{cases} g(z)\bs u(z) & \text{ if $z\in S$}\\
                                           \bs 0 &  \text{ if $z\in K\setminus S$}
 \end{cases}
 $$
 is well defined, continuous on $K$ and each of its coordinates is holomorphic on $K^\circ$.
 It easily follows that  $\bs v \bs f^t=g^6$.
  \end{proof}
\bigskip

According to \cite{mo84}, an ideal $I$ in  a uniform algebra $A$ is said to have the {\it Forelli
 property} (say $I\in \F$),  if there exists $f\in I$ with $Z(f)=Z(I):=\Inter_{h\in I} Z(h)$. 
It is obvious that  in $C(K)$ every  finitely generated ideal has  the Forelli-property;
 just consider the function 
 $$f=\sum_{j=1}^n |f_j|^2=\sum_{j=1}^n \ov f_j  f_j\in I_{C(K)}(f_1,\dots,f_n).$$
  It was shown in \cite{mo84}
 that there exist finitely generated ideals in the disk-algebra that do not have this property.  
 A natural question, therefore, is whether  $C_{\dbar,1}(K)$ has the Forelli-property.
 A sufficient condition for $I=I(f_1,\dots,f_n)$  to belong to $\F$ is that there exists
 $h\in I$ such that \footnote{Instead of 2 we may of course take any power $\alpha>0$.}
  $$|h|\geq \sum_{j=1}^n|f_j|^2.$$

  \begin{theorem}
 Let $\bs f=(f_1,\dots,f_n)\in A(K)^n$ and suppose that $|\bs f|$ has only isolated zeros in $K^\circ$. We assume that there are $h_j\in C_{\dbar,1}(K)$ such that
 $$\bigl|\sum_{j=1}^n h_jf_j\bigr|\geq  \sum_{j=1}^n|f_j|^2.$$
 Then for every $g\in A(K)$ satisfying $|g|\leq \sum_{j=1}^n|f_j|$
 we have $g^{12}\in I_{A(K)}(f_1,\dots,f_n)$.
  \end{theorem}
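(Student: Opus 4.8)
The plan is to use the Forelli-type hypothesis to manufacture a $C_{\dbar,1}(K)$-solution of the equation $\bs x\bs f^t=g^8$, and then to run the $\dbar$-correction of Theorem~\ref{corona}/Proposition~\ref{power5} — still with the multiplier $g^4$ — to replace it by an $A(K)$-solution of $\bs u\bs f^t=g^{12}$; the exponent $12$ in the statement is nothing but $8+4$.

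\textbf{Step 1 (producing the power 8).} First I would put $h:=\sum_{j=1}^n h_jf_j$. Since $A(K)\ss C_{\dbar,1}(K)$ and $C_{\dbar,1}(K)$ is an algebra, $h\in C_{\dbar,1}(K)$, and the hypothesis $|h|\geq\sum_{j=1}^n|f_j|^2=|\bs f|^2$ forces $Z(h)=\Inter_{j=1}^nZ(f_j)$; in particular $Z(h)$ has no cluster point in $K^\circ$. By Cauchy--Schwarz, $|g|\leq\sum_{j=1}^n|f_j|\leq\sqrt n\,|\bs f|$, hence $|g^2|=|g|^2\leq n|\bs f|^2\leq n|h|$. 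Applying Theorem~\ref{powers}(f) to the pair $(g^2,\,nh)$ in $C_{\dbar,1}(K)$ gives $g^8=(g^2)^4\in I_{C_{\dbar,1}(K)}(nh)=I_{C_{\dbar,1}(K)}(h)$, so $g^8=hw$ for some $w\in C_{\dbar,1}(K)$. Then $g^8=hw=\sum_{j=1}^n(wh_j)f_j$ with $wh_j\in C_{\dbar,1}(K)$, i.e. $\bs x:=(wh_1,\dots,wh_n)\in C_{\dbar,1}(K)^n$ solves $\bs x\bs f^t=g^8$.

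\textbf{Step 2 ($\dbar$-correction).} Next I would mimic the proof of Proposition~\ref{power5}: on $K\setminus Z(|\bs f|)$ form the antisymmetric matrix $F=\bigl((\dbar{\bs x}^t\cdot\ov{\bs f})^t-\dbar{\bs x}^t\cdot\ov{\bs f}\bigr)/|\bs f|^2$. Since $\bs x\in C_{\dbar,1}(K)^n$ one has $\dbar{\bs x}\in C^1(K^\circ)^n$, and by Lemma~\ref{power4} (applied to $g/\sqrt n$, using that $|\bs f|$ has only isolated zeros in $K^\circ$) the function $g^4/|\bs f|^2$ extends to an element of $C(K)\inter C^1(K^\circ)$ vanishing on $Z(|\bs f|)$; hence $Fg^4$ extends to an antisymmetric matrix over $C(K)\inter C^1(K^\circ)$, and Theorem~\ref{cr} yields an antisymmetric matrix solution $H$ of $\dbar H=Fg^4$ over $C(K)\inter C^1(K^\circ)$. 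Put $\bs u:=g^4\bs x-\bs fH\in C(K)^n\inter C^1(K^\circ)^n$. Using $\dbar(g^8)=0$ (because $g^8\in A(K)$), the computation of Theorem~\ref{corona} gives on $K^\circ\setminus Z(|\bs f|)$
$$\dbar\bs u=g^4\dbar{\bs x}-\bs f\,\dbar H=g^4\,\frac{\bigl(\dbar(\bs x\bs f^t)\bigr)\ov{\bs f}}{|\bs f|^2}=g^4\,\frac{(\dbar g^8)\,\ov{\bs f}}{|\bs f|^2}=\bs 0 ,$$
so, since $Z(|\bs f|)\inter K^\circ$ is discrete and $\dbar\bs u$ is continuous on $K^\circ$, in fact $\dbar\bs u\equiv\bs 0$ on $K^\circ$ and $\bs u\in A(K)^n$. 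By antisymmetry $\bs fH\bs f^t=0$, whence $\bs u\bs f^t=g^4(\bs x\bs f^t)=g^4g^8=g^{12}$, that is $g^{12}=\sum_{j=1}^n u_jf_j\in I_{A(K)}(f_1,\dots,f_n)$.

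The substantive point is Step~1: the condition $\bigl|\sum h_jf_j\bigr|\geq\sum|f_j|^2$ is precisely a Forelli-type hypothesis, and it is exactly strong enough for Theorem~\ref{powers}(f) to promote the merely bounded quotient $g^2/h$ to a genuine element of $C_{\dbar,1}(K)$, which is where the power $8$ originates. Step~2 is just the Wolff $\dbar$-machinery already developed for Proposition~\ref{power5}; the only place requiring a little care is the continuous extension of $\dbar\bs u$ across the discrete set $Z(|\bs f|)\inter K^\circ$, and that is handled exactly as there.
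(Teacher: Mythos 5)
Your argument is correct and follows the paper's own proof essentially verbatim: the power $8$ is obtained by applying Theorem \ref{powers}(f) to the pair $(g^2,h)$ with $h=\sum h_jf_j$, and the passage from $g^8$ to $g^{12}$ is the Koszul/Wolff correction of Proposition \ref{power5} with the data $\dbar H=Fg^4$, $\bs u=g^4\bs x-\bs fH$. You even supply a detail the paper leaves implicit, namely that $Z(h)=\Inter_{j}Z(f_j)$ has no cluster point in $K^\circ$, which is needed to invoke Theorem \ref{powers}(f).
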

 \begin{proof}
 Let $h=\sum_{j=1}^n h_jf_j$. Then $|g|^2\leq  n\, \sum_{j=1}^n |f_j|^2 \leq n \,|h|$. 
 By Proposition \ref{powers}, there is 
 $k\in C_{\dbar,1}(K)$ such that $g^8=kh$.
  Hence 
 $$g^8=\sum_{j=1}^n (kh_j) f_j\in I_{ C_{\dbar,1}(K)}(f_1,\dots,f_n).$$
 
  Since $|g|\leq \sqrt n\,|\bs f|$ implies that  $g^4/|\bs f|^2 \in C(K)\inter C^1(K^\circ)$ 
  (Proposition \ref{power4}), 
  we obtain in a similar manner as in Proposition \ref{power5}
  that $g^{12}\in I_{A(K)}(f_1,\dots, f_n)$ (just consider the data $\dbar H=F g^4$,  
  $\bs x=(kh_1, \dots, kh_n)$, 
   $\bs u=g^4\bs x -\bs f H$,
  and use the fact that  $\bs x\bs f^t=g^8$ in order to get $u\bs f^t=g^{12}$).
 \end{proof}
 
\section{The $f^N$-problem in $C(K)$ and $C^1(K)$}

\begin{proposition}
Let $h,f_j\in C(K)$ satisfy $|h|\leq \sum_{j=1}^n |f_j|$. Then
 $h^2\in I_{C(K)}(f_1,\dots,f_n)$. Within $\N$, the constant 2 is best possible.
\end{proposition}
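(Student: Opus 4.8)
The plan is to write down the coefficients explicitly, exactly as in the proof of Theorem~\ref{powers}(a) and Lemma~\ref{power4}, using the ``universal denominator'' $\sum_{k}|f_k|^2$. Put $\varphi:=\sum_{k=1}^n|f_k|^2\in C(K)$, and note that $Z(\varphi)=\Inter_{k=1}^nZ(f_k)$. For $j=1,\dots,n$ define
$$
g_j(z)=\begin{cases}\dfrac{\ov{f_j(z)}\,h(z)^2}{\varphi(z)}&\text{if }z\in K\setminus Z(\varphi),\\[2mm] 0&\text{if }z\in Z(\varphi).\end{cases}
$$
On $K\setminus Z(\varphi)$ each $g_j$ is manifestly continuous and $\sum_{j=1}^ng_jf_j=h^2\,\varphi/\varphi=h^2$. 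If $z_0\in Z(\varphi)$ then $\sum_{k}|f_k(z_0)|=0\ge|h(z_0)|$, so $h(z_0)=0$ and $\sum_j g_j(z_0)f_j(z_0)=0=h(z_0)^2$; hence the identity $\sum_{j=1}^ng_jf_j=h^2$ will hold on all of $K$ as soon as one knows that each $g_j\in C(K)$.

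The only real point is continuity of $g_j$ across $Z(\varphi)$, and this is exactly where the exponent $2$ (rather than $1$) is used. First I would note that $|h|^2\le\bigl(\sum_{k=1}^n|f_k|\bigr)^2\le n\,\varphi$ on $K$. Hence, for $z\in K\setminus Z(\varphi)$,
$$
|g_j(z)|=\frac{|f_j(z)|\,|h(z)|^2}{\varphi(z)}\le\frac{|f_j(z)|\,n\,\varphi(z)}{\varphi(z)}=n\,|f_j(z)|.
$$
Since $Z(\varphi)\ss Z(f_j)$, for $z_0\in Z(\varphi)$ we have $f_j(z_0)=0$, so $|g_j(z)|\le n|f_j(z)|\to0$ as $z\to z_0$. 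Therefore $g_j$ extends continuously, with value $0$, to $Z(\varphi)$, i.e. $g_j\in C(K)$, and $h^2=\sum_{j=1}^ng_jf_j\in I_{C(K)}(f_1,\dots,f_n)$, as claimed.

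For the optimality of the power $2$ within $\N$ I would exhibit data for which $h$ itself does not lie in the ideal. Take $n=1$, $K=\ov\D$, $f_1(z)=z$ and $h(z)=|z|$; then $|h|=|f_1|$ and $I_{C(K)}(f_1)=f_1\,C(\ov\D)$. If $h=f_1\,g$ with $g\in C(\ov\D)$, then $g(z)=|z|/z=e^{-i\arg z}$ for $z\ne0$, which has no limit as $z\to0$, a contradiction; so $h\notin I_{C(K)}(f_1)$. I do not anticipate any genuine difficulty here: the construction is completely standard, and the quadratic decay estimate above is the whole content, matched precisely by the counterexample.
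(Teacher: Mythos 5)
Your construction is exactly the paper's: the coefficients $g_j=\ov{f_j}h^2/\sum_k|f_k|^2$ are precisely the paper's $hq_j$ with $q_j=h\ov{f_j}/\sum_k|f_k|^2$, and the continuity across the common zero set is obtained by the same Cauchy--Schwarz-type bound. Your optimality example ($h(z)=|z|$, $f_1(z)=z$ on $\ov\D$) differs from the one the paper invokes via Theorem~\ref{powers}(a) (which uses $(1-z)S(z)$ and $1-z$ with $S$ the atomic inner function), but it is correct and in fact more elementary for the $C(K)$ setting.
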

\begin{proof}
Consider on $K\setminus \Inter_{k=1}^n Z(f_k)$ the functions 
$$q_j=\frac{h \ov f_j}{\sum_{k=1}^n |f_k|^2}.$$
Then, by Cauchy-Schwarz,  
$$|q_j|\leq \frac{\left(\sum_{k=1}^n|f_k |\right)^2}{|\bs f|^2}\leq n.$$
Hence $q_j$ is bounded on $K\setminus \Inter_{k=1}^n Z(f_k)$.
It is then clear that  $g_j:=hq_j\in C(K)$ and that
$$\sum_{j=1}^n g_j f_j=h^2.$$
By Theorem \ref{powers}(a), the power $n=2$ is best possible.
\end{proof}

\begin{proposition}
 Let $h,f_j\in C^1(K)$ satisfy $|h|\leq \sum_{j=1}^n |f_j|$. If we suppose that
 $\Inter_{j=1}^n Z(f_j)\inter K^\circ$ is discrete, then
 $h^3\in I_{C^1(K)}(f_1,\dots,f_n)$. Within $\N$, the constant 3 is best possible.
\end{proposition}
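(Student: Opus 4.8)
The plan is to write down the Bézout coefficients explicitly and verify by hand that they lie in $C^1(K)$. Put $\bs f=(f_1,\dots,f_n)$ and $|\bs f|^2=\sum_{k=1}^n|f_k|^2$, and observe that $Z(|\bs f|)=\bigcap_{k=1}^n Z(f_k)$, and that $|h|\le\sum_k|f_k|=0$ on this set, so that $h$ vanishes on $Z(|\bs f|)$. Define
$$g_j=\begin{cases}\dfrac{h^3\,\ov{f_j}}{|\bs f|^2}&\text{on }K\setminus Z(|\bs f|),\\ \;0&\text{on }Z(|\bs f|),\end{cases}\qquad j=1,\dots,n,$$
so that $\sum_{j=1}^n g_j f_j=h^3|\bs f|^2/|\bs f|^2=h^3$ off $Z(|\bs f|)$ and $=0=h^3$ on $Z(|\bs f|)$; it therefore suffices to prove $g_j\in C^1(K)$ for each $j$. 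By the Cauchy–Schwarz inequality $|h|\le\sqrt n\,|\bs f|$, hence $|h|/|\bs f|\le\sqrt n$ off $Z(|\bs f|)$ and $|g_j|\le|h|^3|f_j|/|\bs f|^2\le|h|^3/|\bs f|\le\sqrt n\,|h|^2$; since $h\in C(K)$ vanishes on $Z(|\bs f|)$, this already gives $g_j\in C(K)$.

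Next I would estimate the first partial derivatives. On $K^\circ\setminus Z(|\bs f|)$ the function $g_j$ is of class $C^1$, and for $D\in\{\partial/\partial x,\partial/\partial y\}$ the product and quotient rules give
$$Dg_j=\Bigl(\frac{3h^2\,Dh}{|\bs f|^2}-\frac{h^3\,D(|\bs f|^2)}{|\bs f|^4}\Bigr)\ov{f_j}+\frac{h^3}{|\bs f|^2}\,D\ov{f_j}.$$
Since $|D(|\bs f|^2)|\le C\,|\bs f|$ (the first derivatives of the $f_k$ are bounded on $K$ and $|f_k|\le|\bs f|$), and using $|f_j|\le|\bs f|$ together with $|h|\le\sqrt n\,|\bs f|$, each of the three summands is bounded in modulus by a constant multiple of $|h|$; hence $|Dg_j|\le\kappa\,|h|$ on $K^\circ\setminus Z(|\bs f|)$. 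This is exactly where the cube suffices: the factor $\ov{f_j}$, which vanishes on $Z(|\bs f|)$, is what turns the merely bounded quantity $D(h^3/|\bs f|^2)$ into one that tends to $0$, so that a bare appeal to Lemma \ref{power4} (which would only yield $h^4\in I_{C^1(K)}(f_1,\dots,f_n)$) can be improved to $h^3$; note that $h^3/|\bs f|^2$ by itself need not lie in $C^1(K)$, as the example $h(z)=z$, $f_1(z)=\ov z$ shows.

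Then I would upgrade this estimate to full $C^1$-regularity exactly as in the proof of Theorem \ref{powers}(b). From the displayed formula, whose ingredients $h,Dh,f_k,Df_k$ all extend continuously from $K^\circ$ to $K$, $Dg_j$ extends continuously across $\partial K\setminus Z(|\bs f|)$ (there the denominator is bounded away from $0$), while on $Z(|\bs f|)\cap\ov{K^\circ}$ the bound $|Dg_j|\le\kappa|h|$ and $h=0$ force $Dg_j\to0$; thus $Dg_j$ (defined on $K^\circ\setminus Z(|\bs f|)$ and set equal to $0$ on $Z(|\bs f|)$) extends continuously to all of $\ov{K^\circ}$. Here the hypothesis enters: every point $a\in Z(|\bs f|)\cap K^\circ$ is isolated in $Z(|\bs f|)$, so $g_j$ is $C^1$ on a punctured neighbourhood of $a$ with $Dg_j$ extending continuously (value $0$) to $a$, and the mean value theorem shows $g_j$ is differentiable at $a$ with $Dg_j(a)=0$; hence $g_j\in C^1(K^\circ)$. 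Tietze's extension theorem then extends $Dg_j|_{\ov{K^\circ}}$ continuously to $K$, so $g_j\in C^1(K)$, and $h^3=\sum_{j=1}^n g_j f_j\in I_{C^1(K)}(f_1,\dots,f_n)$.

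For the optimality of $3$ within $\N$ I would reuse the single-generator example from Theorem \ref{powers}(b): on $K=\ov\D$, $n=1$, take $f_1(z)=\ov z$ and $h(z)=z$; then $|h|=|f_1|$, but any $g\in C^1(\ov\D)$ with $gf_1=h^2$ must agree with $z^2/\ov z$ off the origin, and $\dbar(z^2/\ov z)=-z^2/\ov z^2$ is discontinuous at $0$, so $h^2\notin I_{C^1(\ov\D)}(f_1)$. The main obstacle throughout is the passage from mere boundedness of the difference quotients of $g_j$ to genuine $C^1$-regularity across the common zero set: this is where discreteness of $\bigcap_j Z(f_j)\cap K^\circ$ is indispensable (for the mean value theorem step — compare the Cantor-function phenomenon recorded after Theorem \ref{powers}(b)), and one must keep careful track that the conjugate-weighting $\ov{f_j}$ buys exactly one power of $h$, and no more.
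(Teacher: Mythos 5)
Your proof is correct and follows essentially the same route as the paper's: the same coefficients $q_j=h^3\ov{f_j}/|\bs f|^2$, the same quotient-rule estimate showing $|Dq_j|$ is dominated by a quantity vanishing on $Z(|\bs f|)$ (you use $\kappa|h|$, the paper uses $C|\bs f|$ — equivalent here), the same mean-value-theorem step at the isolated interior zeros, and the same optimality example $h(z)=z$, $f_1(z)=\ov z$.
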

\begin{proof}
The example $h(z)=z$ and $f(z)=\ov z$ in Proposition \ref{powers} (b) shows that 3 is best possible.
On $K\setminus \Inter_{j=1}^n  Z(f_j)$, let
$$q_j=\frac{\ov f_j h^3}{\sum_{k=1}^n |f_k|^2}.$$
We claim that $q_j$ has an extension to a function in $C^1(K)$. 
Let $\bs f=(f_1,\dots,f_n)$.
Since outside $\Inter_{j=1}^n  Z(f_j)$
$$|q_j|\leq  \frac{|\bs f| |h^3|}{|\bs f|^2}=\left(\frac{|h|}{|\bs f|}\right) |h|^2\leq C\, |\bs f|^2,$$
we immediately see that $q_j$ can be  continuously extended to $K$ with value 0 on $Z(|\bs f|)$.
Now, if $D=\partial$ or $\dbar$, then on  
$K^\circ \setminus \Inter_{j=1}^n  Z(f_j)=K^\circ\setminus Z(|\bs f|)$
$$D q_j=\frac{|\bs f|^2\bigl( (D\ov f_j)h^3 +3\ov f_j (Dh) h^2\bigr)
-\ov f_j h^3(\bs f D\ov{\bs f} +\ov{\bs f} D\bs f)}{|\bs f|^4}.
$$
Using that the derivatives are continuous and that $\max \{|h|, |f_j|\}\leq \kappa \,|\bs f|$,
we obtain  constants $C_j$  such that
$$|Dq_j| \leq \left(\frac{|h|}{|\bs f|}\right)^2|D\ov f_j| \; |h| +
3 |\bs f| \;|Dh| \left(\frac{|h|}{|\bs f|}\right)^2+
\frac{|\bs f| \;|h|^3 \;|\bs f|( |D\bs f|+|D\ov{\bs f}|)}{|\bs f|^4}
$$
$$ \leq C_1|\bs f|+  C_2 |\bs f | + |\bs f| \; (|D\bs f|+|D\ov{\bs f}|)\leq C_3|\bs f|.
$$
Thus the partial derivatives admit a continuous extension to $K$. Since
at an isolated zero of $|\bs f|$ the partial derivatives of $q_j$ exist by the mean value
theorem in differential calculus,  we are done.
\end{proof}

\section{Questions}

In this final section we would like to ask several questions and present a series of problems. \\
\begin{enumerate}
\item [1)]  Let $f,f_1,\dots,f_n\in A(\ov\D)$ satisfy $|f|\leq \sum_{j=1}^n |f_j|$. Is 
$f^3\in I_{A(\ov\D)}(f_1,\dots, f_n)$?

\item [2)]  Is there a simple example of a triple $(f,f_1,f_2)$ in $A(\D)$ such that $|f|\leq |f_1| +|f_2|$,
but for which $f^2\notin  I_{A(\ov\D)}(f_1, f_2)$?

\item [3)] Is there a simple example of a function $f$, continuous on $\ov\D$, such that the Pompeiu-integral
$$P_f(z)=\dint_{\ov\D} \frac{f(\xi)}{\xi-z}\;d\sigma(\xi)$$
is not continuously differentiable in $\D$?
\end{enumerate}
\medskip

Let $M$ and $S$ be two specific classes of functions on the unit disk. For example $M$ and $S$
coincide with 
$C_b(\D)$, the set of all bounded,  continuous and  complex-valued  functions on $\D$;
$C^1_b(\D)=C_b(\D)\inter C^1(\D)$; or
$C^1_{bb}(\D)$,  the set of all $C^1$-functions $u$ on $\D$ for which $u$ and $\nabla u$
are bounded.
\medskip

 Give necessary and sufficient conditions on $f\in M$ such that the $\dbar$-equation
 $\dbar u=f$ admits a solution  $u\in S$:
 \medskip
 
 \begin{enumerate}
 \item [4)] $f\in C_b^1(\D)$, $u\in C_b^1(\D)$;
 \item  [5)]$f\in C_b^1(\D)$, $u\in C(\ov \D)\inter C^1(\D)$;
 \item [6)]$f\in C_b^1(\D)$, $u\in C(\ov \D)\inter C_{bb}^1(\D)$;
  \item [7)]$f\in C^1(\D)$,  $u\in C^1(\D)$;
 \item [8)]$f\in C^1(\D)$,  $u\in C_b^1(\D)$;
\item [9)] $f\in C^1(\D)$, $u\in C(\ov \D)\inter C^1(\D)$;
 \item [10)]$f\in C^1(\D)$, $u\in C(\ov \D)\inter C_{bb}^1(\D)$;
 \item [11)]$f\in C(\ov \D)$, $u\in C^1(\D)$;
  \item [12)]$f\in C(\ov \D)$, $u\in C_b^1(\D)$;
    \item [13)] $f\in C(\ov \D)$, $u\in C(\ov\D) \inter C^1(\D)$;
     \item [14)]$f\in C(\ov \D)$, $u\in C(\ov \D)\inter C_{bb}^1(\D)$.

\end{enumerate}



 \end{document}